\DeclareMathOperator{\id}{id}
\DeclareMathOperator{\im}{im}
\DeclareMathOperator{\pr}{pr}
\DeclareMathOperator{\supp}{supp}
\DeclareMathOperator{\Cent}{Cent}
\DeclareMathOperator{\ev}{ev}
\DeclareMathOperator{\End}{End}
\DeclareMathOperator{\Hom}{Hom}
\DeclareMathOperator{\Lin}{Lin}
\theoremstyle{plain}
\newtheorem{satz}{Satz}[section]		
\newtheorem{theorem}[satz]{Theorem}
\newtheorem{lemma}[satz]{Lemma}
\newtheorem{corollary}[satz]{Corollary}
\newtheorem{satz/definition}[satz]{Satz/Definition}
\newtheorem{lemma/definition}[satz]{Lemma/Definition}
\theoremstyle{definition}
\newtheorem{remark}[satz]{Remark}
\newtheorem{definition}[satz]{Definition}
\title{Universal continuous bilinear forms for compactly supported sections of Lie algebra bundles and universal continuous extensions of certain current algebras}
\author{Jan Milan Eyni}
\date{}
\setdefaultitem{\textbullet}{-}{}{}		
\begin{document}
\maketitle

\thispagestyle{empty}

\begin{abstract}
We construct a universal continuous invariant bilinear form for the Lie algebra of compactly supported sections of a Lie algebra bundle in a topological sense. Moreover we construct a universal continuous central extension of a current algebra $A \otimes \frak{g}$ for a finite-dimensional Lie algebra $\frak{g}$ and a certain class of topological algebras $A$. In particular taking $A=C^\infty_c(M)$ for a $\sigma$-compact manifold $M$ we obtain a more detailed justification for a recent result of Janssens and Wockel concerning a universal extension for the Lie algebra $C^\infty_c(M,\frak{g})$.
\end{abstract}


\section*{Introduction and Notation}
\addcontentsline{toc}{section}{Introduction and Notations}
A continuous invariant bilinear form $\gamma$ on a Lie algebra $\frak{g}$ taking values in a locally convex space is called universal if we get  all other continuous invariant bilinear forms on $\frak{g}$ by composing $\gamma$ with a unique continuous linear map. Here invariance means that $\gamma([x,y],z) = \gamma(x,[y,z])$ for all $x,y,z \in \frak{g}$.  In \cite{Guendogan} such a bilinear form was constructed in general. To this end, one considers the quotient of the symmetric square $S^2(\frak{g})$ by the closure of the subspace generated by elements of the form $[x,y]\vee z - x\vee[y,z]$. This quotient is called $V^T_\frak{g}$ and $\kappa_\frak{g} \colon \frak{g} \times \frak{g} \rightarrow V_\frak{g}^T$, $(x,y) \mapsto [x\vee y]$ is a universal invariant bilinear form on $\frak{g}$. We recall this general approach in Section \ref{Pre}. Although any two universal invariant bilinear forms differ only by composition with an isomorphism of topological vector spaces, it is not enough to know the mere existence of a universal extension in general. Often, one would like to use more concrete realisations of universal invariant bilinear forms. This is the reason why in \cite{Guendogan}, G{\"u}ndo\u{g}an constructed a concrete universal continuous bilinear form for the Lie algebra $C^\infty(M,\frak{K})$ of sections for a given Lie algebra bundle $\frak{K}$. If $\frak{g}$ is the finite-dimensional typical fiber of $\frak{K}$ and we consider $V(\frak{K})$ to be the vector bundle with base $M$ and fibers $V(\frak{K}_p)$ for $p \in M$, G{\"u}ndo\u{g}an showed that $C^\infty(M,\frak{K}) \times C^\infty(M, \frak{K}) \rightarrow V(\frak{K})$, $(\eta , \zeta) \mapsto \kappa_\frak{g}\circ (\eta ,\zeta)$ is a universal continuous invariant symmetric bilinear form. In \cite{Bas} Janssens and Wockel constructed a universal continuous central extension of the Lie algebra $C^\infty_c(M,\frak{K})$ of compactly supported sections of a Lie algebra bundle $\frak{K}$. In their proof, they implicitly use the concept of universal continuous invariant bilinear forms for these sections. But they do not discuss whether there exists a universal continuous invariant bilinear forms for the Lie algebra $C^\infty_c(M,\frak{K})$. 
Hence the first aim of this paper is to construct a universal continuous invariant bilinear form on $C^\infty_c(M,\frak{K})$. This will be done in Section \ref{bi}. 


To show the reader the application of this special universal invariant bilinear form we recall in \ref{OmegaRes} how Janssens and Wockel used this universal invariant bilinear form in \cite{Bas}. With the help of this bilinear form they constructed a certain cocycle $\omega$  in $Z_{ct}^2(C_{c}^\infty(M,\frak{K}),E)$ for an appropriate locally convex space $E$, which they proved to be a universal cocycle. We also show the continuity of this cocycle, which was not discussed by Janssens and Wockel (see Lemma \ref{beta} and Theorem \ref{d}).



The second aim of this paper is to construct a universal continuous extension of certain so-called current algebras. In general these are algebras of the form $A \otimes \frak{g}$, where $A$ is a locally convex topological algebra and $\frak{g}$ a locally convex Lie algebra. In 2001, Maier constructed in \cite{Maier} a universal continuous central extensions for current algebras of the form $A \otimes \frak{g}$, where $A$ is a unital, commutative, associative, complete locally convex topological algebra and $\frak{g}$ a finite-dimensional semi-simple Lie algebra. The canonical example for such a current algebra is given by the smooth functions from a manifold $M$ to $\frak{g}$. To show the universality of the cocycle $\omega$ in \cite{Bas}, Janssens and Wockel used Maiers cocycle to construct a universal cocycle for the compactly supported smooth functions from a $\sigma$-compact manifold to a Lie algebra $\frak{g}$ in \cite[Theorem 7.2]{Bas}. 
G{\"u}ndo\u{g}an showed in \cite[5.1.14]{Guendogan} that the ideas from \cite{Bas} can be used to construct a universal cocycle for current algebras $A\otimes \frak{g}$ with pseudo-unital, commutative and associative algebras $A$ that are inductive limits of unital Fr{\'e}chet algebras. But this class of current algebras does not contain the compactly supported smooth maps $C^\infty_c(M,\frak{g})$ from a $\sigma$-compact finite-dimensional manifold $M$ to the Lie algebra $\frak{g}$. 
So in Section \ref{co}, we show that the cocycles constructed in \cite{Bas} respectively \cite[Theorem 5.1.14]{Guendogan} work for locally convex associative algebras $A$ which are the inductive limit of complete locally convex algebras $A_n \subseteq A$, such that we can find an element $1_n \in A$ with $1_n \cdot a =a$ for all $a \in A_n$. Obviously, this class of algebras contains the compactly supported smooth functions on a $\sigma$-compact manifold.

Throughout this paper we will use the following notations and conventions.
\begin{compactitem}
\item We write $\mathbb{N}$ for the set of integers $\{1,2,3,...\}$.
\item All locally convex spaces considered are assumed Hausdorff.
\item If $E$ is finite-dimensional vector space and $M$ a manifold, we write $C^\infty_c(M,E)$ for the space of compactly supported smooth functions from $M$ to $E$.
\item If $M$ is a manifold and $\pi \colon \mathbb{V} \rightarrow M$ a vector bundle with base $M$, we write $C^\infty(M,\mathbb{V})$ for the space of smooth sections in $\mathbb{V}$ and $C^\infty_c(M,\mathbb{V})$ for the space of compactly supported smooth sections. As usual, we write $\Omega^k(M,\mathbb{V})$ for the space of $\mathbb{V}$-valued $k$-forms on $M$ and $\Omega_c^k(M,\mathbb{V})$ for the space of compactly supported $\mathbb{V}$-valued $k$-forms on $M$.
\item Let $M$ be a $\sigma$-compact manifold, $U \subseteq M$ open, $V$ a vector space and $\mathbb{V}$ a vector bundle with base $M$. For $f \in C^\infty_c(U,V)$, $X \in C^\infty_c(U,\mathbb{V})$ and $\omega \in \Omega^k_c(U,\mathbb{V})$ respectively, we write $f_\sim$, $X_\sim$ and $\omega_\sim$ respectively, for the extension of $f$, $X$ and $\omega$ to $M$ by $0$ outside $U$.
\item If $V$ and $W$ are vector spaces, we write $\Lin(V,W)$ for the space of linear maps from $V$ to $W$ and in the case of topological vector spaces we write $\Lin_{ct} (V, W)$ for the space of continuous linear maps.
\item If $\frak{g}$ and $\frak{h}$ are Lie algebras, we write $\Hom (\frak{g}, \frak{h})$ for the space of Lie algebra homomorphisms from $\frak{g}$ to $\frak{h}$ and in the case of topological Lie algebras we write $\Hom_{ct} (\frak{g}, \frak{h})$ for the space of continuous Lie algebra homomorphisms.
\item We write $A_1$ for the unitalisation of an associative algebra $A$.
\end{compactitem}

\section{Preliminaries}\label{Pre}
In this section, we recall the basic concepts of universal continuous invariant bilinear forms. These basic definitions and results can also be found in \cite[Chapter 4]{Guendogan}. 

\begin{definition}
Let $\frak{g}$ be a Lie algebra. A pair $(V,\beta)$ with a vector space $V$ and a symmetric bilinear map $\beta \colon \frak{g} \times \frak{g} \rightarrow V$ is called an {\it invariant symmetric bilinear form} on $\frak{g}$ if $\beta ([x,y] , z) = \beta (x,[y,z])$ for all $x,y,z\in \frak{g}$. The invariant symmetric bilinear form $(V,\beta)$ is called {\it algebraic universal} if for every other invariant symmetric bilinear form $(W,\gamma)$ on $\frak{g}$, there exists a unique linear map $\psi \colon V \rightarrow W$ such that $\gamma = \psi \circ \beta$. It is clear that if $\beta$ is algebraic universal, then another invariant symmetric bilinear form $(W,\gamma)$ on $\frak{g}$ is algebraic universal if and only if we can find an isomorphism of vector spaces $\varphi \colon V \rightarrow W$ with $\gamma = \varphi \circ \beta$.
 
In the case that $\frak{g}$ is a locally convex Lie algebra and $V$ is a locally convex space, the pair $(V,\beta)$ is called {\it continuous invariant symmetric bilinear form} on $\frak{g}$ if $\beta$ is continuous and it is called {\it topological universal} or {\it universal continuous invariant symmetric bilinear form} if for every other continuous invariant symmetric bilinear form $(W,\gamma)$ on $\frak{g}$, there exists a unique continuous linear map $\psi \colon V \rightarrow W$ such that $\gamma = \psi \circ \beta$. It is clear that if $\beta$ is topological universal, then another invariant symmetric bilinear form $(W,\gamma)$ on $\frak{g}$ is topological universal if and only if we can find an isomorphism $\varphi \colon V \rightarrow W$ of topological vector spaces with $\gamma = \varphi \circ \beta$.

\cite[Remark 4.1.5. and Proposition 4.1.7.]{Guendogan} tell us that their always exists a universal continuous invariant symmetric bilinear form $(V_\frak{g}, \kappa_\frak{g})$ for a given locally convex Lie algebra $\frak{g}$.
\end{definition}

With \cite[Proposision 4.3.3]{Guendogan} we get directly the following Lemma \ref{alguni}.
\begin{lemma}\label{alguni}
For a $\sigma$-compact finite-dimensional manifold $M$ and a finite-dimensional perfect Lie algebra $\frak{g}$ the map
\begin{align*}
{\kappa_\frak{g}}_\ast \colon C^\infty_c(M,\frak{g}) \times C^\infty_c(M,\frak{g}) \rightarrow C^\infty_c(M,V_\frak{g}), (f,g) \mapsto \kappa_\frak{g}^T \circ (f,g) 
\end{align*}
is an algebraic universal symmetric invariant bilinear form. Notably the image of $\kappa_\frak{g}$ spans $C^\infty_c(M,V_\frak{g})$.
\end{lemma}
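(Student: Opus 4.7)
The plan is to reduce the lemma directly to Proposition 4.3.3 of \cite{Guendogan}, which is tailored to precisely this setup. First I would verify that ${\kappa_\frak{g}}_\ast$ is indeed a symmetric invariant bilinear form on $C^\infty_c(M,\frak{g})$: symmetry and bilinearity are pointwise consequences of the corresponding properties of $\kappa_\frak{g}$, while invariance follows from the fact that the Lie bracket on $C^\infty_c(M,\frak{g})$ is defined pointwise, combined with the invariance of $\kappa_\frak{g}$ on $\frak{g}$.

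For the universal property I would quote Proposition 4.3.3 of \cite{Guendogan} and carefully match hypotheses: $M$ is $\sigma$-compact finite-dimensional and $\frak{g}$ is finite-dimensional perfect. Perfectness of $\frak{g}$ is the essential assumption, as it is what forces an algebraic invariant bilinear form on $C^\infty_c(M,\frak{g})$ to be determined by its pointwise interaction with $\kappa_\frak{g}$, and hence to factor uniquely through $C^\infty_c(M,V_\frak{g})$.

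For the spanning statement I would give a short direct argument. Since $\frak{g}$ is finite-dimensional, so is $V_\frak{g}$, and by construction $V_\frak{g}$ is spanned by elements of the form $\kappa_\frak{g}(x,y)$. Choose a finite family $(\kappa_\frak{g}(x_i,y_i))_i$ spanning $V_\frak{g}$. Every $h \in C^\infty_c(M,V_\frak{g})$ can then be written as a finite sum $\sum_i \phi_i \cdot \kappa_\frak{g}(x_i,y_i)$ with $\phi_i \in C^\infty_c(M)$, and choosing $\psi_i \in C^\infty_c(M)$ with $\psi_i \equiv 1$ on $\supp(\phi_i)$ gives $\phi_i \cdot \kappa_\frak{g}(x_i,y_i) = {\kappa_\frak{g}}_\ast(\phi_i x_i, \psi_i y_i)$, so the image of ${\kappa_\frak{g}}_\ast$ spans $C^\infty_c(M,V_\frak{g})$. (Alternatively, the spanning claim is automatic from uniqueness in the universal property: otherwise the zero bilinear form would admit a nonzero linear factorisation through ${\kappa_\frak{g}}_\ast$.)

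The main obstacle is essentially clerical, namely verifying that the hypotheses of Proposition 4.3.3 in \cite{Guendogan} line up with those stated in the present lemma. Once that is in place the lemma is a reformulation, and the spanning remark is a short supplementary argument requiring no further technical machinery.
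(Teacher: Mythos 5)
Your proposal matches the paper exactly: the paper gives no proof beyond the remark that the lemma follows directly from \cite[Proposition 4.3.3]{Guendogan} (with the connected case in Corollary 4.3.4), which is precisely your reduction. Your explicit spanning argument via coordinate functions $\phi_i$ and cut-offs $\psi_i$ with $\phi_i\psi_i=\phi_i$ is correct and is a welcome supplement, since the paper merely asserts the spanning statement without detail.
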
 
In the case that $M$ is connected, the preceding Lemma \ref{alguni} can be found in \cite[Corollary 4.3.4]{Guendogan}.

\begin{definition}
If $\frak{g}$ is a Lie algebra, we call the subalgebra 
\begin{align*}
\left\{f \in \Lin (\frak{g}):~ (\forall x,y \in \frak{g}) ~ f([x,y]) = [f(x),y] \right\}
\end{align*}
of the associative algebra $\Lin (\frak{g})$ the {\it Centroid} of $\frak{g}$. 
\end{definition}

The following Lemma \ref{Cent1} can be found in \cite[Lemma 4.1.6]{Guendogan}.
\begin{lemma}\label{Cent1}
Let $\frak{g}$ be a Lie algebra, $W$ a vector space and $\beta \colon \frak{g} \times \frak{g} \rightarrow W$ an invariant bilinear map. Then $\beta(f(x),y) = \beta(x,f(y))$ for all $x \in [\frak{g} , \frak{g}]$, $y \in \frak{g}$ and $f \in \Cent(\frak{g})$. 
\end{lemma}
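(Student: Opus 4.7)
The plan is to reduce the claim to the case of a single commutator $x=[a,b]$ by bilinearity, and then to manipulate both sides using invariance of $\beta$ together with the centroid property until they meet at a common expression.

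First I would unpack the definition of $\Cent(\frak{g})$ slightly. The condition $f([u,v])=[f(u),v]$ together with the skew-symmetry of the bracket yields
\begin{align*}
f([u,v]) = -f([v,u]) = -[f(v),u] = [u,f(v)],
\end{align*}
so elements of the centroid automatically satisfy $f([u,v])=[f(u),v]=[u,f(v)]$ for all $u,v \in \frak{g}$. This symmetric form of the centroid identity is the key tool.

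Next, since any $x \in [\frak{g},\frak{g}]$ is a finite sum of commutators and both sides of the asserted identity are bilinear in $x$, it suffices to treat $x=[a,b]$ for $a,b \in \frak{g}$. For the left-hand side I would rewrite
\begin{align*}
\beta(f([a,b]),y) = \beta([a,f(b)],y) = \beta(a,[f(b),y]) = \beta(a,f([b,y])),
\end{align*}
applying first the centroid identity in the form $f([a,b])=[a,f(b)]$, then the invariance of $\beta$, and finally the centroid identity again in the form $[f(b),y]=f([b,y])$. For the right-hand side I would similarly compute
\begin{align*}
\beta([a,b],f(y)) = \beta(a,[b,f(y)]) = \beta(a,f([b,y])),
\end{align*}
using invariance once and then $[b,f(y)]=f([b,y])$. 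Both sides agree with $\beta(a,f([b,y]))$, which gives the claim.

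There is no serious obstacle here; the only subtlety worth flagging is that the paper's definition of $\Cent(\frak{g})$ is stated with a single identity $f([x,y])=[f(x),y]$, so one should explicitly record the skew-symmetry derivation above before using the identity in the form $f([u,v])=[u,f(v)]$ in the main computation.
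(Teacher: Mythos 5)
Your proof is correct. Note that the paper does not actually prove this lemma; it simply cites \cite[Lemma 4.1.6]{Guendogan}, so there is no in-text argument to compare against. Your reduction to $x=[a,b]$ and the two chains
\begin{align*}
\beta(f([a,b]),y) &= \beta([a,f(b)],y) = \beta(a,[f(b),y]) = \beta(a,f([b,y])),\\
\beta([a,b],f(y)) &= \beta(a,[b,f(y)]) = \beta(a,f([b,y]))
\end{align*}
use only the stated form of invariance $\beta([u,v],w)=\beta(u,[v,w])$ and the centroid identity (together with its skew-symmetric variant $f([u,v])=[u,f(v)]$, which you correctly derive first), so no symmetry of $\beta$ is needed. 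The only cosmetic point is that both sides are \emph{linear} in $x$ rather than ``bilinear in $x$''; the reduction from a general element of $[\frak{g},\frak{g}]$ to a single commutator is otherwise exactly as it should be.
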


The next Lemma \ref{perfect} comes from \cite[Remark 4.2.7]{Guendogan}.
\begin{lemma}\label{perfect}
The Lie algebra $C^\infty_c(M,\frak{g})$ is perfect for every $\sigma$-compact manifold $M$ and perfect finite-dimensional Lie algebra $\frak{g}$.
\end{lemma}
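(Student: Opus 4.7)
The plan is to reduce the problem to expressing each element of a basis of $\frak{g}$ as a sum of commutators, which works because $\frak{g}$ is finite-dimensional and perfect, and then to multiply by smooth functions while controlling the support via a cutoff.

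First I would fix a basis $e_1, \ldots, e_n$ of the finite-dimensional Lie algebra $\frak{g}$. Since $\frak{g} = [\frak{g}, \frak{g}]$, each basis vector admits a decomposition $e_i = \sum_{j=1}^{N_i} [y_{ij}, z_{ij}]$ with $y_{ij}, z_{ij} \in \frak{g}$. Now given $f \in C^\infty_c(M, \frak{g})$ with compact support $K \subseteq M$, write $f = \sum_{i=1}^n f_i \, e_i$ with $f_i \in C^\infty_c(M, \mathbb{R})$ (the real-valued coefficient functions with respect to the basis), so that $\supp(f_i) \subseteq K$ for every $i$.

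Next, using that $M$ is a $\sigma$-compact manifold and $K$ is compact, I would choose a cutoff function $\chi \in C^\infty_c(M, \mathbb{R})$ with $\chi \equiv 1$ on an open neighborhood of $K$. Then for each pair $(i,j)$, both $f_i \cdot y_{ij}$ (with support in $K$) and $\chi \cdot z_{ij}$ lie in $C^\infty_c(M, \frak{g})$, and pointwise
\begin{align*}
[f_i \, y_{ij}, \chi \, z_{ij}](p) \;=\; f_i(p) \chi(p) [y_{ij}, z_{ij}] \;=\; f_i(p) [y_{ij}, z_{ij}],
\end{align*}
since $\chi$ is identically $1$ on $\supp(f_i)$. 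Summing over $j$ yields $f_i(p) \, e_i$, and summing over $i$ recovers $f(p)$. Hence
\begin{align*}
f \;=\; \sum_{i=1}^n \sum_{j=1}^{N_i} \bigl[ f_i \, y_{ij},\, \chi \, z_{ij} \bigr] \;\in\; [C^\infty_c(M, \frak{g}), C^\infty_c(M, \frak{g})],
\end{align*}
which proves the lemma.

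There is no serious obstacle here: the only subtlety is ensuring that both factors of each bracket actually have compact support, which is handled by introducing the cutoff $\chi$ that equals $1$ on $\supp(f)$ so that $f_i \chi = f_i$. The $\sigma$-compactness of $M$ is only needed insofar as it guarantees (via paracompactness) the existence of such a cutoff on any compact subset, and in fact this much is true on any (Hausdorff) manifold.
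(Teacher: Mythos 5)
Your proof is correct. The paper does not prove this lemma itself but cites \cite[Remark 4.2.7]{Guendogan}, where perfectness of $A\otimes\frak{g}$ is obtained from pseudo-unitality of $A$ via local units --- for $A=C^\infty_c(M,\mathbb{R})$ the local unit is exactly your cutoff $\chi$ with $\chi f_i=f_i$, so your argument is the concrete instantiation of the same idea.
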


\begin{definition}
Let $M$ be a manifold, $\frak{g}$ a Lie algebra and $\pi \colon \frak{K} \rightarrow M$ a vector bundle with typical fiber $\frak{g}$. If for every $m \in M$ the space $\pi^{-1}(\{m\})$ is endowed with a Lie algebra structure such that there exists an atlas of local trivialisations $\varphi \colon \pi^{-1}(U_\varphi) \rightarrow U_\varphi \times \frak{g}$ of $\frak{K}$ such that  for every $p \in U_\varphi$ the map $\varphi(p, \_) \colon \frak{K}_m \rightarrow \frak{g}$ is a Lie algebra homomorphism, then we call $\frak{K}$ a {\it Lie algebra bundle}.
\end{definition}

%
%
%


In Definition \ref{TopFuerBuendel} we endow the vector space of sections as well as compactly supported sections into a given vector bundle with a locally convex topology. We follow the definitions from \cite[Chaper 3]{GloecknerII}.
\begin{definition}\label{TopFuerBuendel} 
Let $M$ be a finite-dimensional manifold, $V$ a finite-dimensional vector space and $\pi \colon \mathbb{V} \rightarrow M$ a vector bundle with typical fiber $V$. If $\eta \in C^\infty(M,\mathbb{V})$ and $\varphi \colon \pi^{-1}(U) \rightarrow U_\varphi \times V$ is a local trivialisation of $\mathbb{V}$ we write $\eta_\varphi:= \pr_2 \circ \varphi \circ \eta|_{U_\varphi} \in C^\infty(U_\varphi,V)$ for the local representation of $\eta$. Let $\mathcal{A}$ be an atlas of $\mathbb{V}$. We give $C^\infty(M,\mathbb{V})$ the initial topology with respect to the maps $\sigma_\varphi \colon C^\infty(M,\mathbb{V}) \rightarrow C^\infty(U_\varphi , V)$, $\eta \mapsto \eta_\varphi$. \cite[Lemma 3.9]{GloecknerII} tells us that this topology does not depend on the choose of the atlas. Moreover \cite[Lemma 3.7]{GloecknerII} tells us that the topological embedding $C^\infty(M,\mathbb{V}) \rightarrow \prod_{\varphi \in \mathcal{A}} C^\infty(U_\varphi , V) \eta \mapsto (\eta_\varphi)_{\varphi \in \mathcal{A}}$ has closed image and so $C^\infty(M,\mathbb{V})$ becomes a locally convex space. Especially we see that $C^\infty(M,\mathbb{V})$ is a Fr{\'e}chet space if we find a countable atlas of local trivialisations of $\mathbb{V}$. 

If $K \subseteq M$ is compact we write $C^\infty_K(M,\mathbb{V})$ for the closed subspace of sections from $\mathbb{V}$, whose supports are contained in $K$. In the case of an countable atlas of local trivialisations of $\mathbb{V}$ it is clear that $C^\infty_K(M,\mathbb{V})$ is a Fr{\'e}chet space. We give $C_c^\infty(M,\mathbb{V})$ the topology such that it becomes the inductive limit of the spaces $C_K^\infty(M,\mathbb{V})$ in the category of locally convex spaces, where $K$ runs through all compact sets. 

If $\frak{g}$ is a finite-dimensional Lie algebra and $\frak{K}$ a Lie algebra bundle with typical fiber $\frak{g}$, we define the Lie bracket $[\_,\_] \colon C^\infty(M,\frak{K}) \times C^\infty(M,\frak{K}) \rightarrow C^\infty(M,\frak{K})$ by $[\eta,\zeta](p) = [\eta(p),\zeta(p)]$ for $\eta,\zeta \in C^\infty(M,\frak{K})$, where the latter Lie bracket, is taken in $\frak{K}_p$. Together with this Lie bracket $C^\infty(M,\frak{K})$ becomes a topological Lie algebra.
\end{definition}

\begin{lemma}
Let $M$ be a finite-dimensional $\sigma$-compact manifold, $\frak{g}$ a finite-dimensional Lie algebra and $\frak{K}$ a Lie algebra bundle with typical fiber $\frak{g}$. In this situation $C_c^\infty(M,\frak{K})$ becomes a topological Lie algebra.
\end{lemma}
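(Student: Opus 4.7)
The pointwise bracket from Definition \ref{TopFuerBuendel} extends to $C_c^\infty(M,\frak{K})$ since for $\eta, \zeta \in C_c^\infty(M,\frak{K})$ one has $\supp [\eta,\zeta] \subseteq \supp \eta \cap \supp \zeta$, and the Lie-algebra identities are inherited fibrewise from the brackets on $\frak{K}_p$. The only substantive task is therefore to show that the resulting bracket $[\cdot,\cdot] \colon C_c^\infty(M,\frak{K}) \times C_c^\infty(M,\frak{K}) \to C_c^\infty(M,\frak{K})$ is continuous.

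The plan is to reduce this to the Fr\'echet-space case already settled by the preceding lemma. Using $\sigma$-compactness of $M$, fix a compact exhaustion $K_1 \subseteq K_2^\circ \subseteq K_2 \subseteq \ldots$ with $\bigcup_n K_n = M$. Then $C_c^\infty(M,\frak{K}) = \varinjlim_n C_{K_n}^\infty(M,\frak{K})$ in the category of locally convex spaces, and each $C_{K_n}^\infty(M,\frak{K})$ is a Fr\'echet space, since $M$ admits a countable atlas of local trivialisations, as noted in Definition \ref{TopFuerBuendel}. Each $C_{K_n}^\infty(M,\frak{K})$ is a closed topological subspace of $C^\infty(M,\frak{K})$, and because the bracket of two sections with support in $K_n$ again has support in $K_n$, the continuity of $[\cdot,\cdot]$ on $C^\infty(M,\frak{K})$ supplied by the preceding lemma restricts to a continuous bilinear map $C_{K_n}^\infty(M,\frak{K}) \times C_{K_n}^\infty(M,\frak{K}) \to C_{K_n}^\infty(M,\frak{K})$. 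Composing with the continuous inclusion into the inductive limit produces continuous bilinear maps into $C_c^\infty(M,\frak{K})$ for every $n$.

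The main obstacle will be to pass from continuity on each level $C_{K_n}^\infty \times C_{K_n}^\infty$ to continuity on the full product $C_c^\infty(M,\frak{K}) \times C_c^\infty(M,\frak{K})$. The universal property of inductive limits does not apply to bilinear maps directly, and the product topology on an LF-space with itself need not coincide with the inductive limit of the products of the defining Fr\'echet spaces. However, for a strict countable inductive limit $E = \varinjlim_n E_n$ of Fr\'echet spaces, a bilinear map $b \colon E \times E \to F$ into a locally convex space $F$ is continuous as soon as each restriction $b|_{E_n \times E_n}$ is; this is a standard criterion in the functional-analytic framework of compactly supported sections (compare the analogous reasoning for $C_c^\infty(M,V)$ in Gl\"ockner's treatment cited in Definition \ref{TopFuerBuendel}). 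Applying this criterion to the bracket finishes the proof.
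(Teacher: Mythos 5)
Your reduction to the steps $C^\infty_{K_n}(M,\frak{K})$ is fine as far as it goes, but the step you lean on to finish --- the claim that a bilinear map $b\colon E\times E\to F$ on a strict countable inductive limit $E=\varinjlim_n E_n$ of Fr\'echet spaces is continuous as soon as every restriction $b|_{E_n\times E_n}$ is continuous --- is false in this generality, so the proof has a genuine gap at its crucial point. A counterexample already lives on $E=C^\infty_c(\mathbb{R},\mathbb{R})=\varinjlim_n C^\infty_{[-n,n]}(\mathbb{R},\mathbb{R})$: put
\begin{align*}
b(\varphi,\psi)=\sum_{n=1}^{\infty}\varphi^{(n)}(0)\,\psi(n).
\end{align*}
The sum is finite for compactly supported $\psi$, and on each step $C^\infty_{[-n,n]}\times C^\infty_{[-n,n]}$ only finitely many summands survive, so all restrictions are continuous. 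But $b$ is not jointly continuous: joint continuity would yield continuous seminorms $p,q$ with $|b(\varphi,\psi)|\le p(\varphi)q(\psi)$; the restriction of $p$ to the Fr\'echet space $C^\infty_{[-1,1]}$ is dominated by $\max_{k\le N}\sup|\varphi^{(k)}|$ for some finite $N$, and testing against a fixed $\psi$ supported near an even integer $n\ge N+2$ with $\psi(n)=1$ and against $\varphi_\lambda(x)=\lambda^{-N-1}\chi(x)\cos(\lambda x)$ gives $\max_{k\le N}\sup|\varphi_\lambda^{(k)}|\to 0$ while $|b(\varphi_\lambda,\psi)|=|\varphi_\lambda^{(n)}(0)|\sim\lambda^{\,n-N-1}\to\infty$. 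So ``continuity on the steps'' is strictly weaker than joint continuity on an LF-space, exactly because (as you yourself note) the product topology on $E\times E$ need not be the inductive limit of the $E_n\times E_n$.

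What rescues the Lie bracket is precisely the property you mention only to justify well-definedness, namely its locality: $\supp[\eta,\zeta]\subseteq\supp\eta\cap\supp\zeta$, i.e.\ the bracket is the pushforward of the fibrewise bilinear bundle map $\frak{K}\oplus\frak{K}\to\frak{K}$, $(v,w)\mapsto[v,w]_{\frak{K}_p}$. For such local superposition-type operators, continuity on compactly supported sections does hold; this is the content of the $\Omega$-Lemma, and the paper's proof is exactly that one-line application (the same device it uses for the continuity of $\kappa_{\frak{K}}$ and of ${\kappa_{\frak{g}}}_\ast$, via \cite[Corollary 4.17]{GloecknerIII}). If you want to keep your step-by-step reduction, you must replace the false general criterion by an argument that actually exploits locality, for instance by realising $C^\infty_c(M,\frak{K})$ as a complemented subspace of a countable direct sum via a partition of unity as in Lemma \ref{QuotientenAbb} and observing that the bracket then has only ``finitely many interacting blocks'' near each point.
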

\begin{proof}
The map $\frak{K} \oplus \frak{K} \rightarrow \frak{K}$ that maps $(v,w)$ to $[v,w]_{\frak{K}_p}$ for $v,w \in \frak{K}_p$ and $p \in M$ is continuous. With the $\Omega$-Lemma (see, e.g. \cite[Theorem 8.7]{Michor} or \cite[F.24]{Gloeckner}) we see that $C^\infty_c(M,\frak{K})$ is a topological Lie algebra.
\end{proof}

\section{Topological universal bilinear forms for \\ compactly supported sections of Lie\\ algebra bundles}\label{bi}
The aim of this section is to construct a universal invariant continuous bilinear form for the space of compactly supported sections into a Lie algebra bundle. To this end, we first show the ``local statement'', that means we construct a universal continuous invariant bilinear form for the compactly supported smooth functions on a $\sigma$-compact manifold into a Lie algebra $\frak{g}$ (Theorem \ref{local uni}). Afterwards we glue the local solutions together to a global one (Theorem \ref{GlobTopUn}).

\bigskip

The following fact is well known.
\begin{lemma}\label{Top}
If $X$ is a locally compact space, $(K_n)_{n \in \mathbb{N}}$ a locally finite cover by compact sets $K_n$ then there exists an open neighbourhood $V_n$  of $K_n$ in $X$ for every $n  \in \mathbb{N}$, such that $(V_n)_{n \in \mathbb{N}}$ is locally finite.
\end{lemma}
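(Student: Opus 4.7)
My plan is to exploit the fact that $X = \bigcup_n K_n$ makes $X$ (locally compact Hausdorff and) $\sigma$-compact, and then to use a compact exhaustion of $X$ to choose each $V_n$ as the complement of a suitable compact set from that exhaustion. Local finiteness will be forced by the nested structure of the exhaustion rather than by any ``tight'' choice of neighborhoods.

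First I would build a compact exhaustion $L_1 \subseteq L_2 \subseteq \cdots$ of $X$ with each $L_j$ compact, $L_j \subseteq \operatorname{int}(L_{j+1})$, and $\bigcup_j L_j = X$, setting $L_0 := \emptyset$. This is standard: start with $L_1 := K_1$, and given $L_j$ cover it by finitely many open sets with compact closure (using local compactness), take $W_j$ to be their union, and set $L_{j+1} := \overline{W_j} \cup K_{j+1}$. Then $L_j \subseteq W_j \subseteq \operatorname{int}(L_{j+1})$ and $\bigcup_j L_j \supseteq \bigcup_j K_j = X$.

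Next, for each $n$ I would let
\[ k(n) := \min\{k \geq 1 : K_n \cap L_k \neq \emptyset\}, \]
which is finite since $K_n$ is compact and $X = \bigcup_k L_k$. I then define
\[ V_n := X \setminus L_{k(n)-1}. \]
Since $L_{k(n)-1}$ is compact and $X$ is Hausdorff, $L_{k(n)-1}$ is closed, so $V_n$ is open. By the minimality of $k(n)$ we have $K_n \cap L_{k(n)-1} = \emptyset$, hence $K_n \subseteq V_n$.

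Finally I would verify local finiteness of $(V_n)$. Given $x \in X$, choose $k$ with $x \in \operatorname{int}(L_k)$, which is an open neighborhood of $x$. Suppose $V_n \cap \operatorname{int}(L_k) \neq \emptyset$. Then $L_k \not\subseteq L_{k(n)-1}$, which by nestedness forces $k(n) \leq k$, and consequently $K_n \cap L_k \supseteq K_n \cap L_{k(n)} \neq \emptyset$. Since $L_k$ is compact and the family $(K_n)_{n\in \mathbb{N}}$ is locally finite, only finitely many indices $n$ can satisfy $K_n \cap L_k \neq \emptyset$, so only finitely many $V_n$ meet $\operatorname{int}(L_k)$.

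I do not expect any serious obstacle: the main conceptual point is simply to realize that one may take $V_n$ as large as $X$ minus a compact piece of the exhaustion, and the only standard input needed beyond that is the construction of the compact exhaustion in a locally compact Hausdorff $\sigma$-compact space, which is classical.
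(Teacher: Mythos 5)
Your proof is correct. Note that the paper itself offers no proof of Lemma \ref{Top} --- it is stated as ``well known'' --- so there is no argument of the author's to compare against. Your route (pass to a compact exhaustion $L_1\subseteq L_2\subseteq\cdots$ with $L_j\subseteq\operatorname{int}(L_{j+1})$ and take $V_n:=X\setminus L_{k(n)-1}$) is a clean way to get local finiteness ``for free'' from the nesting, and every step checks out: $V_n$ is open because compact sets are closed, $K_n\subseteq V_n$ by minimality of $k(n)$, and the counting argument via ``a locally finite family meets a compact set in only finitely many members'' is sound. This differs from the more common textbook argument, which shrinks each $K_n$ into a finite union of relatively compact open sets chosen small enough (using local finiteness of $(K_n)$ directly) --- that version produces $V_n$ close to $K_n$, whereas yours produces very large $V_n$; both are equally valid for the purpose the paper uses the lemma (Lemma \ref{QuotientenAbb}). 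Two cosmetic caveats: you use the Hausdorff property (compact $\Rightarrow$ closed), which is not literally in the hypothesis but is harmless in context since $X$ is a manifold there; and $k(n)$ is undefined when $K_n=\emptyset$, an edge case you should dispose of by setting $V_n=\emptyset$.
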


\begin{lemma}\label{QuotientenAbb}
If $M$ is a $\sigma$-compact finite-dimensional manifold, $E$ a finite-dimensional vector space
and $(\rho_m)_{m \in \mathbb{N}}$ a partition of unity
, then $\Phi \colon \bigoplus_{m \in \mathbb{N}}\allowbreak C^\infty (M,E) \rightarrow C^\infty_c(M,E)$, $(f_m)_{m \in \mathbb{N}} \mapsto \sum_{m \in \mathbb{N}} \rho_m \cdot f_m$ is a quotient map.
\end{lemma}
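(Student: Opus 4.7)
The plan is to verify that $\Phi$ is a continuous linear surjection, then establish the quotient property through the inductive-limit description $C_c^\infty(M,E) = \varinjlim_K C_K^\infty(M,E)$ by constructing a continuous linear section of $\Phi$ over each step $C_K^\infty(M,E)$. Throughout, I take the standard convention that the partition of unity is locally finite with $K_m := \supp(\rho_m)$ compact, which is available on any $\sigma$-compact manifold.

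For continuity, the universal property of the locally convex direct sum reduces the claim to continuity of $f \mapsto \rho_m f$ on each summand. This factors as $C^\infty(M,E) \to C_{K_m}^\infty(M,E) \hookrightarrow C_c^\infty(M,E)$; pointwise multiplication by the fixed smooth function $\rho_m$ is continuous (for instance, via the $\Omega$-lemma referenced above), and the inclusion into the inductive limit is continuous by construction. Surjectivity is immediate: given $f \in C_c^\infty(M,E)$, local finiteness of $(K_m)_m$ together with compactness of $\supp f$ shows that only finitely many indices $m$ satisfy $K_m \cap \supp f \ne \emptyset$; call this finite set $I$. The tuple $(f_m)$ with $f_m := f$ for $m \in I$ and $f_m := 0$ otherwise lies in $\bigoplus_m C^\infty(M,E)$ and satisfies $\Phi((f_m)) = \sum_m \rho_m f = f$.

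For the quotient property, it suffices to show: whenever $g \colon C_c^\infty(M,E) \to Z$ is linear into a locally convex space $Z$ with $g \circ \Phi$ continuous, $g$ itself is continuous. By the inductive-limit topology this reduces to continuity of $g|_{C_K^\infty(M,E)}$ for every compact $K \subseteq M$. Fix such a $K$. The set $I(K) := \{m : K_m \cap K \ne \emptyset\}$ is finite, and I would define
\[
s_K \colon C_K^\infty(M,E) \to \bigoplus_{m \in \mathbb{N}} C^\infty(M,E), \qquad s_K(f) := (f_m)_{m \in \mathbb{N}},
\]
where $f_m := f$ for $m \in I(K)$ and $f_m := 0$ otherwise. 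This $s_K$ factors through the finite subsum $\bigoplus_{m \in I(K)} C^\infty(M,E)$ (carrying the product topology), and each coordinate is the continuous inclusion $C_K^\infty(M,E) \hookrightarrow C^\infty(M,E)$, so $s_K$ is continuous into the direct sum. For $f \in C_K^\infty(M,E)$ one has $\Phi(s_K(f)) = \sum_{m \in I(K)} \rho_m f = \sum_m \rho_m f = f$, because $\rho_m f = 0$ for $m \notin I(K)$. Hence $g|_{C_K^\infty(M,E)} = (g \circ \Phi) \circ s_K$ is continuous.

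I do not foresee a serious obstacle. The only subtle point is to ensure that $s_K$ actually lands in the direct sum rather than merely in the direct product; this is precisely what local finiteness of $(\rho_m)$ provides via finiteness of $I(K)$.
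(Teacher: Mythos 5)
Your proof is correct, and the first half (continuity of $\Phi$ via the universal property of the direct sum, factoring $f\mapsto\rho_m f$ through $C^\infty_{K_m}(M,E)$) coincides with the paper's. The second half takes a genuinely different route. The paper constructs a single \emph{global} continuous linear right inverse $\Psi\colon C^\infty_c(M,E)\to\bigoplus_m C^\infty(M,E)$, $\gamma\mapsto(\sigma_m\gamma)_m$, which requires auxiliary bump functions $\sigma_m$ equal to $1$ on $\supp(\rho_m)$ and supported in a locally finite family of open neighbourhoods $V_m$ (this is what its Lemma \ref{Top} is for); openness of $\Phi$ then follows at once, and one even gets that $\Phi$ is a topologically split surjection. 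You instead verify the universal property of the final locally convex topology directly: for linear $g$ with $g\circ\Phi$ continuous, you reduce to the steps $C^\infty_K(M,E)$ of the inductive limit and build, for each compact $K$, a continuous \emph{local} section $s_K$ that simply copies $f$ into the finitely many coordinates $m$ with $\supp(\rho_m)\cap K\neq\emptyset$. This avoids Lemma \ref{Top} and the bump functions entirely, since the identity $\Phi(s_K(f))=\sum_{m\in I(K)}\rho_m f=f$ needs no cutoff; the price is that your sections $s_K$ are not compatible as $K$ grows and hence do not assemble into a global splitting, so you get the quotient property but not the stronger retraction statement. Both arguments are complete and rest on the same two facts (local finiteness of the supports and the inductive limit description of $C^\infty_c$), so the choice is a matter of what one wants: the paper's $\Psi$ buys a continuous global section, yours is the more economical path to the lemma as stated.
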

\begin{proof}
First we show the continuity of $\Phi$. Because $\Phi$ is linear, it is enough to show that $C^\infty(M,E) \rightarrow C^\infty_c(M,E)$, $f \mapsto \rho_m \cdot f$ is continuous for every $m \in \mathbb{N}$. The local convex space $C^\infty_c(M,E)$ is the inductive limit of spaces $C^\infty_{K_n}(M,E)$ with $n \in \mathbb{N}$, where $(K_n)_{n \in \mathbb{N}}$ is a compact exhaustion of $M$. Because the support of $\rho_m$ is compact we can find $n \in \mathbb{N}$ with $\supp(\rho_m) \subseteq K_n$. We see that the map $C^\infty(M,E) \rightarrow C^\infty_c(M,E)$, $f \mapsto \rho_m \cdot f$ takes its image in the subspace $C^\infty_{K_n}(M,E)$. Now we conclude that $\Phi$ is continuous, because $C^\infty(M,E) \rightarrow C^\infty_{K_n}(M,E)$, $f \mapsto \rho_m \cdot f$ is continuous. 

With Lemma \ref{Top} we find a locally finite cover $(V_n)_{n \in \mathbb{N}}$ of $M$, such that $V_n$ is a neighbourhood for $\supp(\rho_n)$. For $n \in \mathbb{N}$ we choose a smooth function $\sigma_n \colon M \rightarrow [0,1]$, such that $\sigma_n|_{\supp(\rho_n)}\equiv 1$ and $\supp(\sigma_n) \subseteq V_n$. Because a compact subset of $M$  is only intersected by finite many sets of the cover $(V_n)_{n\in \mathbb{N}}$ we can define the map $\Psi \colon C^\infty_c(M,E) \rightarrow \bigoplus_{n=1}^\infty C^\infty(M,E)$, $\gamma \mapsto (\sigma_n \cdot \gamma)_{n \in \mathbb{N}}$, witch is obviously a right-inverse for $\Phi$. If $K\subseteq M$ is compact, we find $N \in \mathbb{N}$, such that $K \cap V_n=\emptyset$ for $n \geq N$. We conclude $\Psi(C^\infty_K(M,E)) \subseteq \prod^N_{n=1} C^\infty(M,E) \times \{0\}$. Obviously the map $C^\infty(M,E) \rightarrow \prod_{n=1}^NC^\infty(M,E)$, $\gamma \mapsto (\sigma_n \cdot \gamma)_{n=1,..,N}$ is continuous. We conclude that $\Psi$ is a continuous linear right-inverse for $\Psi$ and so we see, that $\Phi$ is an open linear map.
%
%
%
\end{proof}

\begin{lemma}
If $M$ is a finite-dimensional $\sigma$-compact manifold and $\frak{g}$ a finite-dimensional Lie algebra, then ${\kappa_{\frak{g}}}_\ast \colon C^\infty_c(M,\frak{g})^2 \rightarrow C_c^\infty(M,V_\frak{g})$, $(f,g) \mapsto \kappa_\frak{g}\circ (f,g)$ is continuous.
\end{lemma}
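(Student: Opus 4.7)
The proof should mirror the preceding lemma (continuity of the Lie bracket on $C^\infty_c(M,\frak{K})$), since $\kappa_\frak{g}$ is simply a continuous bilinear map between the finite-dimensional spaces $\frak{g}\times\frak{g}$ and $V_\frak{g}$. The plan is to view $\kappa_\frak{g}$ as a smooth fibre map on trivial bundles and then apply the $\Omega$-Lemma exactly as before.

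First, I would identify $C^\infty_c(M,\frak{g})^2$ with $C^\infty_c(M,\frak{g}\oplus\frak{g})$. On each compact exhaustion step $K_n\subseteq M$ one has a natural topological isomorphism $C^\infty_{K_n}(M,\frak{g}\oplus\frak{g}) \cong C^\infty_{K_n}(M,\frak{g}) \times C^\infty_{K_n}(M,\frak{g})$. Since $M$ is $\sigma$-compact, the LF-topology on the left hand side is countable, and the countable LF-limit commutes with finite products of LF-spaces, so this assembles to a topological isomorphism $C^\infty_c(M,\frak{g}\oplus\frak{g}) \cong C^\infty_c(M,\frak{g})^2$. Under this identification, a pair $(f,g)$ corresponds to the section $p\mapsto(f(p),g(p))$.

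Next, $\kappa_\frak{g}\colon \frak{g}\oplus\frak{g} \to V_\frak{g}$ is a continuous bilinear map between finite-dimensional locally convex spaces, hence smooth; it sends $0$ to $0$, so post-composition preserves supports and thus sends $C^\infty_c(M,\frak{g}\oplus\frak{g})$ to $C^\infty_c(M,V_\frak{g})$. The $\Omega$-Lemma (in the version cited in the previous proof, e.g.\ \cite[Theorem 8.7]{Michor} or \cite[F.24]{Gloeckner}) then yields that the push-forward $C^\infty_c(M,\frak{g}\oplus\frak{g}) \to C^\infty_c(M,V_\frak{g})$, $h\mapsto \kappa_\frak{g}\circ h$, is (smooth and in particular) continuous. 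Composing with the identification from the first step gives the continuity of ${\kappa_\frak{g}}_\ast$.

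I do not expect a serious obstacle: the content is entirely the $\Omega$-Lemma, which has already been used in the same section for the Lie bracket. The only point requiring a line of justification is the compatibility between the product topology on $C^\infty_c(M,\frak{g})^2$ and the LF-topology on $C^\infty_c(M,\frak{g}\oplus\frak{g})$; this is a standard property of countable strict inductive limits of Fréchet spaces and uses $\sigma$-compactness of $M$. Everything else is routine.
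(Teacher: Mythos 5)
Your argument is correct, but it takes a different route from the paper, which disposes of this lemma in one line by citing \cite[Corollary 4.17]{GloecknerIII} --- a ready-made result asserting that a continuous bilinear map between the (finite-dimensional) target spaces induces a continuous bilinear map on the corresponding spaces of compactly supported functions. What you do instead is exactly the technique the paper uses for the two neighbouring lemmas (the Lie bracket on $C^\infty_c(M,\frak{K})$ and the map $\kappa_\frak{K}$): identify $C^\infty_c(M,\frak{g})^2$ with $C^\infty_c(M,\frak{g}\oplus\frak{g})$ and push forward along the fibrewise smooth map $\kappa_\frak{g}$, which preserves the zero section and hence supports, via the $\Omega$-Lemma. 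The step you rightly single out --- that the countable strict inductive limit commutes with the finite product, so that the product topology on $C^\infty_c(M,\frak{g})^2$ agrees with the LF-topology on $C^\infty_c(M,\frak{g}\oplus\frak{g})$ --- is genuinely the crux: it is a real theorem (true for countable inductive limits, false for uncountable ones), and it is precisely the LF-space subtlety that Gl\"ockner's Corollary 4.17 packages up, which is why the paper can settle the matter with a citation while your version must invoke the limit/product interchange explicitly. One further point deserves a sentence in a fully written-out version: the push-forward $h\mapsto\kappa_\frak{g}\circ h$ is not linear, so continuity on each step $C^\infty_{K_n}$ of the LF-space does not formally imply continuity on the limit; one needs the version of the $\Omega$-Lemma adapted to compactly supported sections (as in \cite{GloecknerII}), which uses that the push-forward is support-decreasing. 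This is the same (implicit) issue as in the paper's proofs of the adjacent lemmas, so it is not a defect peculiar to your approach. Net effect: the citation is shorter; your argument is self-contained modulo two standard inputs and makes visible where $\sigma$-compactness enters.
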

\begin{proof}
The lemma follows directly from \cite[Corollary 4.17]{GloecknerIII}.
\end{proof}

\begin{theorem}\label{local uni}
Let $\frak{g}$ be a perfect finite-dimensional Lie algebra and $M$ a finite-dimensional $\sigma$-compact  manifold. Then ${\kappa_{\frak{g}}}_\ast \colon C^\infty_c(M, \frak{g})^2 \rightarrow C_c^\infty(M,V_\frak{g})$ is topological universal.
\end{theorem}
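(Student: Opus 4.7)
The first step of my plan is to get the linear map $\psi$ for free: Lemma \ref{alguni} says that ${\kappa_\mathfrak{g}}_*$ is algebraic universal (using that $\mathfrak{g}$ is perfect), so for every continuous invariant symmetric bilinear form $\gamma \colon C^\infty_c(M,\mathfrak{g})^2 \to W$ there is a unique linear map $\psi \colon C^\infty_c(M, V_\mathfrak{g}) \to W$ with $\gamma = \psi \circ {\kappa_\mathfrak{g}}_*$, and uniqueness persists among continuous linear maps. The only real work is therefore to prove that this $\psi$ is continuous.

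To do so, I plan to use Lemma \ref{QuotientenAbb}: pick a locally finite partition of unity $(\rho_m)_{m \in \mathbb{N}}$ on $M$ by compactly supported functions, so that $\Phi \colon \bigoplus_{m} C^\infty(M, V_\mathfrak{g}) \to C^\infty_c(M, V_\mathfrak{g})$, $(f_m)_m \mapsto \sum_m \rho_m f_m$ is a quotient map. Then $\psi$ is continuous iff $\psi \circ \Phi$ is continuous, iff for each $m$ the map $\psi_m \colon C^\infty(M, V_\mathfrak{g}) \to W$, $f \mapsto \psi(\rho_m f)$ is continuous. So the task reduces to a local continuity statement in each slot of the direct sum.

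For the local statement I exploit that $V_\mathfrak{g}$ is finite-dimensional (being a quotient of $S^2(\mathfrak{g})$) and is spanned by elements $\kappa_\mathfrak{g}(x,y)$ with $x,y \in \mathfrak{g}$; choose a basis $e_i = \kappa_\mathfrak{g}(x_i,y_i)$, $i=1,\dots,d$, of $V_\mathfrak{g}$, and choose $\sigma_m \in C^\infty_c(M,\mathbb{R})$ with $\sigma_m \equiv 1$ on $\supp(\rho_m)$. Any $f \in C^\infty(M, V_\mathfrak{g})$ expands uniquely as $f = \sum_i \alpha_i e_i$ with $\alpha_i \in C^\infty(M,\mathbb{R})$ depending continuously on $f$. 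A pointwise check (comparing on $\supp(\rho_m)$, where $\sigma_m = 1$, and off it, where $\rho_m=0$) gives
\[
\rho_m f \;=\; \sum_{i=1}^d {\kappa_\mathfrak{g}}_*(\rho_m \alpha_i\, x_i,\, \sigma_m\, y_i),
\]
viewing $x_i,y_i$ as constant $\mathfrak{g}$-valued functions. Applying $\psi$ yields
\[
\psi_m(f) \;=\; \sum_{i=1}^d \gamma(\rho_m \alpha_i\, x_i,\, \sigma_m\, y_i).
\]
Each summand is continuous in $f$: the map $f \mapsto \alpha_i$ is continuous, $\alpha_i \mapsto \rho_m \alpha_i x_i$ is a continuous linear map $C^\infty(M,\mathbb{R}) \to C^\infty_{\supp(\rho_m)}(M,\mathfrak{g}) \hookrightarrow C^\infty_c(M,\mathfrak{g})$, the second argument is a fixed element of $C^\infty_c(M,\mathfrak{g})$, and $\gamma$ is continuous. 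Hence $\psi_m$ is continuous for every $m$, and the quotient argument finishes the proof.

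The main conceptual obstacle is bridging the gap between the algebraic universality (which only lets us decompose a given compactly supported section) and the topological statement (where the decomposition must depend continuously on the section). The trick of choosing a \emph{fixed} basis of $V_\mathfrak{g}$ realised by pure tensors $\kappa_\mathfrak{g}(x_i,y_i)$, combined with the cut-off $\sigma_m$, gives a \emph{canonical} continuous presentation of $\rho_m f$ as a finite sum of $\kappa_{\mathfrak{g}*}$-images; everything after that is formal continuity of quotient maps and of the direct-sum topology.
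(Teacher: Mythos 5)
Your proof is correct and rests on the same essential mechanism as the paper's: reducing the continuity of the induced linear map, via the quotient map of Lemma \ref{QuotientenAbb}, to continuity on each summand of the direct sum, and then expanding $\rho_m f$ in a basis of the finite-dimensional space $V_\frak{g}$ realised inside the image of $\kappa_\frak{g}$ so that the scalar component functions can be absorbed into one argument of ${\kappa_\frak{g}}_\ast$. The only difference is organisational: the paper routes the argument through the abstract universal form $\kappa^T_{C_c^\infty(M,\frak{g})}$ and shows the comparison map is a topological isomorphism, whereas you verify the universal property directly for an arbitrary continuous $\gamma$, with the explicit decomposition $\rho_m f=\sum_i{\kappa_\frak{g}}_\ast(\rho_m\alpha_i x_i,\sigma_m y_i)$ playing the role of the paper's decomposition $v_i\cdot\rho_m=\sum_j{\kappa_\frak{g}}_\ast({\xi_i}_j,{\zeta_i}_j)$.
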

\begin{proof}
We know that ${\kappa_\frak{g}}_\ast \colon C^\infty_c(M, \frak{g})^2 \rightarrow C_c^\infty(M,V_\frak{g})$ is an algebraic universal invariant form. Moreover $\kappa^T_{C_c^\infty (M,\frak{g})} \colon C_c^\infty (M,\frak{g})^2 \rightarrow V^T_{C_c^\infty (M,\frak{g})}$ is a topological universal invariant form. 

Because ${\kappa_\frak{g}}_\ast$ is a continuous invariant bilinear map, we find a continuous linear map $f \colon V_T(C_c^\infty (M,\frak{g})) \rightarrow C_c^\infty (M,V_\frak{g})$ such that ${\kappa_\frak{g}}_\ast = f \circ \kappa^T_{C_c^\infty (M,\frak{g})}$ and because $\kappa^T_{C_c^\infty (M,\frak{g})}$ is an invariant bilinear map, we find a linear map $g\colon C_c^\infty (M,V_\frak{g}) \rightarrow V^T_{C_c^\infty (M,\frak{g})}$ with $\kappa^T_{C_c^\infty (M,\frak{g})} = g \circ {\kappa_\frak{g}}_\ast$. We get the commutative diagram
\begin{align*}
\begin{xy}
\xymatrixcolsep{5pc}\xymatrix{
C_c^\infty (M,\frak{g})^2 \ar[r]^-{{\kappa_\frak{g}}_\ast} \ar[d]_-{\kappa^T_{C_c^\infty (M,\frak{g})}} & C_c^\infty (M,V_\frak{g}) \ar@/^1pc/[dl]^-{g}\\
V^T_{C_c^\infty (M,\frak{g})} \ar@/^0.0pc/[ur]^-{f}
}
\end{xy}
\end{align*}
With $f\circ g \circ {\kappa_\frak{g}}_\ast = {\kappa_\frak{g}}_\ast$ and the fact that ${\kappa_\frak{g}}_\ast$ is algebraic universal, we get 
\begin{align}\label{dora}
f \circ g = {\id}_{C_c^\infty (M,V_\frak{g})}.   
\end{align}
Let $(\rho_m)_{m \in \mathbb{N}}$ be a partition of unity of $M$. From Lemma \ref{QuotientenAbb} we know the quotient map $\Phi$ and get the commutative diagram
\begin{align*}
\begin{xy}
\xymatrixcolsep{5pc}\xymatrix{
\bigoplus_{m \in \mathbb{N}} C^\infty (M, V_\frak{g}) \ar[r]^-{h} \ar[d]_-{\Phi} & V^T_{C^\infty_c(M,\frak{g})}\\
C_c^\infty (M,V_\frak{g}) \ar[ur]_-{g}
}
\end{xy}
\end{align*}
with $h \colon \bigoplus_{m \in \mathbb{N}} C^\infty (M,V_\frak{g}) \rightarrow V^T_{C_c^\infty(M,\frak{g})}$, $(\varphi_{m})_{m \in \mathbb{N}}  \mapsto  \sum_{m \in \mathbb{N}}  g(\varphi_{m} \cdot \rho_m)$. If we can show that $h$ is continuous, we get that also $g$ is continuous.

Because $h$ is linear it is enough to show that $C^\infty(M,V_\frak{g}) \rightarrow V^T_{C^\infty_c(M,\frak{g})}$, $\varphi \mapsto g(\varphi \cdot \rho_m)$ is continuous for all $m\in \mathbb{N}$. The space $V^T_\frak{g}$ is finite-dimensional, because $\frak{g}$ is finite-dimensional. Let $(v_i)_{i=1,...,n}$ be a basis of $V_\frak{g}$. We write $\varphi_i$ for the $i$-th component for a map $\varphi \in C^\infty(M,V_\frak{g})$. We can find ${\xi_i}_{j}, {\zeta_i}_j \in C^\infty_c(M,\frak{g})$, such that $v_i \cdot \rho_m = \sum_{j=1}^{n_i} {\kappa_{\frak{g}}}_\ast({\xi_i}_{j} , {\zeta_i}_j)$, because ${\kappa_\frak{g}}_\ast \colon C^\infty_c(M,\frak{g})^2 \rightarrow C^\infty_c(M,V_\frak{g})$ is algebraic universal and so $\im({\kappa_\frak{g}}_\ast)$ generates $C_c^\infty(M,V_\frak{g})$. For $\varphi \in C^\infty(M,V_\frak{g})$ we calculate 
\begin{align*}
&g(\varphi \cdot \rho_m) = \sum\limits_{i=1}^n g(\varphi_i \cdot \rho_m \cdot v_i) = \sum\limits_{i=1}^n \sum\limits_{j=1}^{n_i} g\left(\varphi_i \cdot {\kappa_\frak{g}}_\ast ({\xi_i}_j, {\zeta_i}_j)\right)\\
= &\sum\limits_{i=1}^n \sum\limits_{j=1}^{n_i} g\left( {\kappa_\frak{g}}_\ast (\varphi_i \cdot {\xi_i}_j, {\zeta_i}_j)\right) 
= \sum\limits_{i=1}^n \sum\limits_{j=1}^{n_i} \kappa^T_{C_c^\infty (M,\frak{g})}(\varphi_i \cdot {\xi_i}_j, {\zeta_i}_j).
\end{align*}
Because $C^\infty(M,\mathbb{R}) \rightarrow V^T_{C^\infty_c(M,\frak{g})}$, $\psi \mapsto \kappa^T_{C_c^\infty (M,\frak{g})}(\psi \cdot {\xi_i}_j, {\zeta_i}_j)$ is continuous we see that also $g$ is continuous.

Now we have $g \circ f \circ \kappa^T_{C_c^\infty (M,\frak{g})} = \kappa^T_{C_c^\infty (M,\frak{g})}$. Because $g$ is continuous we get $g\circ f = \id_{V^T_{C_c^\infty(M,\frak{g})}}$. And with (\ref{dora}), we see that $f$ is an isomorphism of topological vector spaces.  
\end{proof}


\begin{remark}
If $\frak{g}$ and $\frak{h}$ are Lie algebras and $f\colon \frak{h} \rightarrow \frak{g}$ is a Lie algebra homomorphism, then there exists a unique linear map $f_\kappa \colon V_\frak{h} \rightarrow V_\frak{g}$ with $f_{\kappa}(\kappa_\frak{h}(x,y)) = \kappa_\frak{g} (f(x),f(y))$.
\end{remark}

\begin{definition}
Let $M$ be a manifold, $\frak{g}$ a finite-dimensional Lie algebra and $\frak{K}$ a Lie algebra bundle with base $M$ and typical fiber $\frak{g}$. If $\mathcal{A}$ is an atlas of local trivialisations of $\frak{K}$, we define $V(\frak{K}):= \bigcup_{m \in M} V(\frak{K}_m)$ and the surjection $\rho \colon V(\frak{K}) \rightarrow M$, $v \mapsto m$ for $v \in \frak{K}_m$. For a local trivialisation $\varphi \colon \pi^{-1}(U_\varphi) \rightarrow U_\varphi \times \frak{g}$ we define the map $\tilde{\varphi} \colon \rho^{-1}(U_\varphi) \rightarrow U_\varphi \times V_\frak{g}$, $v \mapsto \big(\rho(v), \varphi(\rho(v),\_)_{\kappa}(v)\big)$. Together with the atlas of local trivialisations $\{\tilde{\varphi}: \varphi \in \mathcal{A} \}$ we get a vector bundle $\rho \colon V(\frak{K}) \rightarrow M$. In this article we will always write $\tilde{\varphi}$ for the trivialisation of $V(\frak{K})$ that comes from a trivialisation $\varphi$ of $\frak{K}$.
\end{definition}

\begin{definition}
For a manifold $M$, a Lie algebra $\frak{g}$ and a Lie algebra bundle $\frak{K}$ with base $M$ and typical fiber $\frak{g}$, we define the map $\kappa_\frak{K} \colon C_c^\infty(M,\frak{K})^2 \rightarrow C_c^\infty(M,V(\frak{K}))$ by $\kappa_\frak{K}(X,Y)(m)=\kappa_{\frak{K}_m}(X(m),Y(m))$ for $m \in M$.
\end{definition}

\begin{lemma}
If $M$ is a $\sigma$-compact, finite-dimensional manifold, $\frak{g}$ a finite-dimensional Lie algebra and $\frak{K}$ a Lie algebra bundle with base $M$ and typical fiber $\frak{g}$, then $\kappa_\frak{K}\colon C_c^\infty(M,\frak{K})^2 \rightarrow C_c^\infty(M,V(\frak{K}))$ is continuous.
\end{lemma}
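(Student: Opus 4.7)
The plan is to reduce continuity of $\kappa_\frak{K}$ to the local situation already handled for the trivial bundle case, by exploiting how the target bundle $V(\frak{K})$ is built from $\frak{K}$ via the functor $V$.

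First I would use that $C_c^\infty(M,\frak{K})$ and $C_c^\infty(M,V(\frak{K}))$ are strict inductive limits of the Fr\'echet spaces $C_K^\infty(M,\frak{K})$ and $C_K^\infty(M,V(\frak{K}))$ as $K$ runs through the compact subsets of $M$. Because $\kappa_\frak{K}$ is bilinear, continuity on $C_c^\infty(M,\frak{K})^2$ follows once one checks continuity of the restriction $\kappa_\frak{K}\colon C_K^\infty(M,\frak{K})^2 \to C_c^\infty(M,V(\frak{K}))$ for every compact $K \subseteq M$. Pointwise, $\kappa_\frak{K}(X,Y)(m)$ vanishes whenever $X(m)=0$ or $Y(m)=0$, so $\supp\kappa_\frak{K}(X,Y) \subseteq \supp X \cap \supp Y \subseteq K$, and the restriction factors through the Fr\'echet subspace $C_K^\infty(M,V(\frak{K}))$.

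Next I would exploit the initial topology on $C_K^\infty(M,V(\frak{K}))$ with respect to the local representation maps $\sigma_{\tilde\varphi}$ for an atlas $\mathcal{A}$ of local trivialisations $\varphi\colon \pi^{-1}(U_\varphi)\to U_\varphi\times \frak{g}$ of $\frak{K}$. The crucial identity is that on $U_\varphi$,
\begin{equation*}
\kappa_\frak{K}(X,Y)_{\tilde\varphi}(p) \;=\; \varphi(p,\_)_\kappa\bigl(\kappa_{\frak{K}_p}(X(p),Y(p))\bigr) \;=\; \kappa_\frak{g}\bigl(X_\varphi(p),Y_\varphi(p)\bigr),
\end{equation*}
which is immediate from the definition of $\tilde\varphi$, the fact that $\varphi(p,\_)$ is a Lie algebra isomorphism onto $\frak{g}$, and the defining intertwining property $f_\kappa\circ \kappa_\frak{h} = \kappa_\frak{g}\circ(f\times f)$ of the induced linear map from the preceding remark. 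Consequently $\sigma_{\tilde\varphi}\circ \kappa_\frak{K} = m_\varphi \circ (\sigma_\varphi \times \sigma_\varphi)$, where $m_\varphi\colon C^\infty(U_\varphi,\frak{g})^2 \to C^\infty(U_\varphi,V_\frak{g})$ denotes the pointwise application of $\kappa_\frak{g}$.

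Each factor of this composition is continuous: $\sigma_\varphi$ is continuous by the very definition of the topology on $C_K^\infty(M,\frak{K})$, and $m_\varphi$ is continuous by the $\Omega$-Lemma applied to the smooth (indeed bilinear, finite-dimensional) map $\kappa_\frak{g}\colon \frak{g}\times \frak{g}\to V_\frak{g}$, exactly as in the proof of the previous lemma (which was a direct appeal to \cite[Corollary 4.17]{GloecknerIII}). The universal property of the initial topology on $C_K^\infty(M,V(\frak{K}))$ then gives continuity of $\kappa_\frak{K}\colon C_K^\infty(M,\frak{K})^2 \to C_K^\infty(M,V(\frak{K}))$, and composing with the continuous inclusion $C_K^\infty(M,V(\frak{K}))\hookrightarrow C_c^\infty(M,V(\frak{K}))$ yields the claim. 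The only mildly delicate point is the first reduction step, namely that continuity of a bilinear map out of the product of strict (LF)-spaces is equivalent to continuity on each $C_K \times C_K$; this is standard for bilinear maps on strict (LF)-spaces and is used implicitly throughout the paper.
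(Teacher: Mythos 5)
Your argument is correct, and it lands on the same underlying mechanism as the paper, but it packages it differently. The paper's proof is a two-line appeal to the $\Omega$-Lemma applied globally to the fibre-preserving map $\frak{K}\oplus\frak{K}\rightarrow V(\frak{K})$, $(v,w)\mapsto \kappa_{\frak{K}_p}(v,w)$: continuity of the pushforward on compactly supported sections is then a direct citation. You instead re-derive the relevant special case of that lemma by hand: reduce to the Fr\'echet steps $C^\infty_K$ of the strict (LF)-structure, localise via an atlas and the initial topology, observe the intertwining identity $\kappa_\frak{K}(X,Y)_{\tilde\varphi}=\kappa_\frak{g}(X_\varphi,Y_\varphi)$ coming from the defining property of $\varphi(p,\_)_\kappa$, and only then invoke the $\Omega$-Lemma (or \cite[Corollary 4.17]{GloecknerIII}) for the trivial-bundle pieces. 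What your route buys is transparency --- it makes explicit exactly where the Lie algebra bundle structure and the functoriality of $V(\cdot)$ enter, and it parallels the paper's own local-to-global strategy in Lemma \ref{Span} and Theorem \ref{GlobTopUn}; what it costs is length and one extra ingredient you correctly flag as delicate, namely that joint continuity of a bilinear map on a product of (LF)-spaces follows from continuity on the steps (Grothendieck's theorem on separately continuous bilinear maps on (LF)-spaces, which needs the countability of the compact exhaustion, i.e.\ $\sigma$-compactness of $M$). The paper's citation of the $\Omega$-Lemma absorbs all of this bookkeeping at once.
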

\begin{proof}
The map $\frak{K} \oplus \frak{K} \rightarrow V(\frak{K})$ that maps $(v,w)$ to $\kappa_{\frak{K}_p}(v,w)$ for $v,w \in \frak{K}_p$ and $p \in M$ is continuous. The assertion now follows from the $\Omega$-Lemma (see, e.g. \cite[Theorem 8.7]{Michor} or \cite[F.24]{Gloeckner}).
\end{proof}

\begin{lemma}\label{Span}
The image of $\kappa_\frak{K}$ spans $C_c^\infty(M,V(\frak{K}))$, if $\frak{g}$ is a perfect finite-dimensional Lie algebra, $M$ a $\sigma$-compact finite-dimensional manifold and $\pi_\frak{K}\colon \frak{K} \rightarrow M$ a Lie algebra bundle with base $M$ and typical fiber $\frak{g}$.
\end{lemma}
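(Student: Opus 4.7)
The plan is to reduce the global statement to the trivial case (already handled by Lemma \ref{alguni}) using a partition of unity subordinate to local trivialisations of $\frak{K}$, and then to transfer the local decompositions back to $\frak{K}$ via the trivialisations, which by construction intertwine $\kappa_{\frak{K}_p}$ with $\kappa_\frak{g}$.

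Concretely, I would fix $\eta\in C_c^\infty(M,V(\frak{K}))$ and let $K:=\supp(\eta)$, which is compact. Since $\frak{K}$ is locally trivial, I can cover $K$ by finitely many open sets $U_1,\dots,U_N$ belonging to an atlas of local trivialisations $\varphi_i\colon \pi^{-1}(U_i)\to U_i\times\frak{g}$. Each $U_i$ is an open subset of the second-countable manifold $M$, hence is itself a $\sigma$-compact finite-dimensional manifold. I would then pick a partition of unity $\rho_1,\dots,\rho_N$ on an open neighbourhood of $K$ subordinate to this cover with $\rho_i$ having compact support in $U_i$, and write $\eta=\sum_{i=1}^N\rho_i\eta$ where each $\rho_i\eta$ has compact support contained in $U_i$.

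For the local step, fix $i$ and transport $\rho_i\eta|_{U_i}$ through $\tilde{\varphi_i}$ to an element $\eta_i\in C_c^\infty(U_i,V_\frak{g})$. By Lemma \ref{alguni} applied on $U_i$, $\eta_i$ lies in the span of the image of ${\kappa_\frak{g}}_\ast$, so there exist finitely many $f_{ij},g_{ij}\in C_c^\infty(U_i,\frak{g})$ and scalars $\lambda_{ij}$ with
\begin{align*}
\eta_i=\sum_j \lambda_{ij}\,\kappa_\frak{g}\circ (f_{ij},g_{ij}).
\end{align*}
Pulling back through $\varphi_i$, the sections $X_{ij},Y_{ij}\in C_c^\infty(U_i,\frak{K}|_{U_i})$ defined by $X_{ij}(p):=\varphi_i(p,\_)^{-1}(f_{ij}(p))$ and analogously for $Y_{ij}$ have compact support in $U_i$ and extend by $0$ to elements $(X_{ij})_\sim,(Y_{ij})_\sim\in C_c^\infty(M,\frak{K})$. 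Because $\varphi_i(p,\_)$ is a Lie algebra isomorphism and $\tilde{\varphi_i}$ is defined using its induced map on $V$-spaces, the identity $\tilde{\varphi_i}(\kappa_{\frak{K}_p}(v,w))=(p,\kappa_\frak{g}(\varphi_i(p,v),\varphi_i(p,w)))$ holds, so $\kappa_\frak{K}((X_{ij})_\sim,(Y_{ij})_\sim)$ equals $\rho_i\eta$ on $U_i$ (up to the scalar) and is $0$ outside. Summing over $i,j$ expresses $\eta$ as a finite linear combination of values of $\kappa_\frak{K}$.

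The main conceptual point, which is the only non-routine step, is the compatibility $\tilde{\varphi}\circ\kappa_{\frak{K}_p}=\kappa_\frak{g}\circ(\varphi,\varphi)$ used above; once this is observed, everything else amounts to standard partition-of-unity bookkeeping together with the local statement from Lemma \ref{alguni}.
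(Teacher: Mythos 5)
Your proposal is correct and follows essentially the same route as the paper: cover $\supp(\eta)$ by finitely many trivialising neighbourhoods, split $\eta$ with a subordinate partition of unity, apply Lemma \ref{alguni} on each $U_i$, and transport back through $\varphi_i$ and $\tilde{\varphi}_i$. You merely spell out the compatibility $\tilde{\varphi}_i\circ\kappa_{\frak{K}_p}=\kappa_\frak{g}\circ(\varphi_i,\varphi_i)$ and the extension-by-zero step, which the paper leaves implicit.
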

\begin{proof}

To show the assertion of the lemma, we only need to show that the global statement can be reduced to the local one, because the local statement follows from Lemma \ref{alguni}. Let $\eta \in C_c^\infty(M,V(\frak{K}))$ and $K:= \supp(\eta)$. We find local trivialisations $\varphi_i \colon \pi^{-1}(U_i) \rightarrow U_i \times \frak{g}$ of $\frak{K}$ for $i = 1,..,k$ with $K\subseteq \bigcup_{i=1}^n U_i$. Let $(\lambda_i)_{i=0,..,k}$ be a partition of unity of $M$ that is subordinate to the open cover that consists of the sets $M \setminus K$ and $U_i$ for $i=1,..,k$. We get $\eta = \sum_{i=1}^k \lambda_i \cdot \eta$ and $\lambda_i \cdot \eta \in C_c^\infty(M,V(\frak{K}))$ with $\supp(\lambda_i \cdot \eta) \subseteq U_i$. The assertion now follows from the fact that $\tilde{\varphi}_i \colon \rho_{V(\frak{K})}^{-1}(U_i) \rightarrow U_i \times V_\frak{g}$ is a local trivialisation of $V(\frak{K})$.
\end{proof}

\begin{lemma}
Let $M$ be a finite-dimensional $\sigma$-compact manifold, $\frak{g}$ a finite-dimensional Lie algebra and $\frak{K}$ a Lie algebra bundle with typical fiber $\frak{g}$. If $\frak{g}$ is perfect, then also $C_c^\infty(M,\frak{K})$ is perfect.
\end{lemma}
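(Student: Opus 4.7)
The plan is to reduce the global perfectness statement to the local one given by Lemma~\ref{perfect}, using a partition of unity subordinate to a cover by trivialising open sets. The main ingredients are already in place, since an analogous localisation has just been carried out in the proof of Lemma~\ref{Span}.

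More precisely, I would start with an arbitrary $X \in C_c^\infty(M,\frak{K})$ with $K:=\supp(X)$, cover $K$ by finitely many trivialising open sets $U_1,\ldots,U_k$ for $\frak{K}$ via trivialisations $\varphi_i \colon \pi^{-1}(U_i)\rightarrow U_i \times \frak{g}$, and pick a smooth partition of unity $(\lambda_i)_{i=0,\ldots,k}$ subordinate to the open cover $\{M\setminus K, U_1,\ldots, U_k\}$. Then $X = \sum_{i=1}^k \lambda_i \cdot X$ with each $\lambda_i \cdot X \in C_c^\infty(M,\frak{K})$ supported in $U_i$, so it suffices to show that every compactly supported section whose support lies in a trivialising open set $U_i$ is a finite sum of commutators in $C_c^\infty(M,\frak{K})$.

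For such a section $Y$ the trivialisation $\varphi_i$ identifies $Y|_{U_i}$ with a function $f \in C_c^\infty(U_i,\frak{g})$. By Lemma~\ref{perfect}, applied to the $\sigma$-compact manifold $U_i$ and the perfect Lie algebra $\frak{g}$, we can write $f = \sum_{j=1}^N [g_j,h_j]$ with $g_j,h_j \in C_c^\infty(U_i,\frak{g})$. Transporting these back through $\varphi_i^{-1}$ gives compactly supported sections $G_j,H_j$ of $\frak{K}$ over $U_i$, which I extend to elements ${G_j}_\sim,{H_j}_\sim \in C_c^\infty(M,\frak{K})$ by zero outside $U_i$. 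Since the Lie bracket in $C_c^\infty(M,\frak{K})$ is defined pointwise and $\varphi_i(p,\_)$ is a Lie algebra homomorphism on each fibre, we obtain $Y = \sum_{j=1}^N [{G_j}_\sim,{H_j}_\sim]$.

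No real obstacle is expected: everything is purely algebraic once the partition of unity has reduced the problem to the trivial-bundle case, and the only thing to check carefully is that bracketing commutes with the pointwise Lie-algebra isomorphisms provided by the trivialisations, which is immediate from the definition of a Lie algebra bundle.
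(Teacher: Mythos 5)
Your argument is correct and follows essentially the same route as the paper: a partition of unity subordinate to a cover by trivialising sets reduces the claim to the local statement of Lemma~\ref{perfect}, exactly as done there (the paper even reuses the setup from the proof of Lemma~\ref{Span}). Your write-up merely makes explicit the final transport of commutators through the trivialisations, which the paper leaves implicit.
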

\begin{proof}
Because the assertion holds for the local statement, it is enough to show that the global statement can be reduced to the local one. Let $\eta \in C_c^\infty(M,\frak{K})$ and $K:= \supp(\eta)$. We choose $\varphi_i$ and $\lambda_i$ analogous to the proof of Lemma \ref{Span} and calculate $\eta = \sum_{i=1}^n \lambda_i \cdot \eta$. We have $\lambda_i \cdot \eta \in C_c^\infty(M,\frak{K})$ with $\supp(\lambda_i \cdot \eta) \subseteq U_i$. This finishes the proof.
\end{proof}

In \cite[Theorem 4.4.4]{Guendogan} a local statement for algebraic universal invariant bilinear forms is used to get an analogous global statement for the space of algebraic universality of spaces of sections in a Lie algebra bundle. We now want to transfer this approach to a topological statement for compactly supported sections in a Lie algebra bundle in Theorem \ref{GlobTopUn}.
\begin{theorem}\label{GlobTopUn}
For a perfect finite-dimensional Lie-algebra $\frak{g}$, a $\sigma$-compact manifold $M$ and a Lie algebra bundle $\frak{K}$ with base $M$ and typical fiber $\frak{g}$, the map $\kappa_\frak{K} \colon C_c^\infty (M,\frak{K})^2 \rightarrow C_c^\infty (M,V(\frak{K}))$ is topological universal.
\end{theorem}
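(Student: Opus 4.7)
The strategy is to adapt the proof of Theorem \ref{local uni} to the bundle setting by combining it with a trivialisation-plus-partition-of-unity reduction in the spirit of \cite[Theorem 4.4.4]{Guendogan}. First I would establish that $\kappa_\frak{K}$ is algebraic universal. Uniqueness of a factorisation through $\kappa_\frak{K}$ is immediate from Lemma \ref{Span}. For existence, given an invariant symmetric bilinear $\beta$ on $C^\infty_c(M,\frak{K})$, I would choose a locally finite cover $(U_i)$ of $M$ by trivialising open sets with trivialisations $\varphi_i$ and a subordinate partition of unity $(\rho_i)$, decompose $\eta = \sum_i \rho_i \eta$ for $\eta \in C^\infty_c(M,V(\frak{K}))$, transport each summand via $\tilde\varphi_i$ to a compactly supported $V_\frak{g}$-valued function on $U_i$, and piece together a linear map using the local algebraic universality (Lemma \ref{alguni}); well-definedness uses Lemma \ref{Cent1} and perfectness of $C^\infty_c(M,\frak{K})$.

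Once algebraic universality is in hand, the argument runs in direct parallel to Theorem \ref{local uni}. Topological universality of $\kappa^T_{C^\infty_c(M,\frak{K})}$ together with continuity of $\kappa_\frak{K}$ yields a continuous linear $f \colon V^T_{C^\infty_c(M,\frak{K})} \to C^\infty_c(M,V(\frak{K}))$ with $\kappa_\frak{K} = f \circ \kappa^T_{C^\infty_c(M,\frak{K})}$, while algebraic universality of $\kappa_\frak{K}$ gives a (not yet known to be continuous) linear $g$ in the opposite direction with $\kappa^T_{C^\infty_c(M,\frak{K})} = g \circ \kappa_\frak{K}$. The identity $f \circ g = \id$ then holds on the spanning image of $\kappa_\frak{K}$ (Lemma \ref{Span}), hence everywhere. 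It remains to show that $g$ is continuous, after which $g \circ f = \id$ follows exactly as in the local proof.

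For the continuity of $g$, I would construct a bundle-valued analogue of the quotient map of Lemma \ref{QuotientenAbb}. Pick a locally finite trivialising cover $(U_n, \varphi_n)$ with subordinate partition of unity $(\rho_n)$, and consider
\begin{align*}
\Phi \colon \bigoplus_{n \in \mathbb{N}} C^\infty(U_n, V_\frak{g}) \to C^\infty_c(M, V(\frak{K})), \quad (\psi_n)_n \mapsto \sum_n \bigl(\rho_n \cdot (\tilde\varphi_n^{-1} \circ \psi_n)\bigr)_\sim.
\end{align*}
The same locally finite right-inverse construction as in Lemma \ref{QuotientenAbb} shows that $\Phi$ is a quotient map, so continuity of $g$ reduces to continuity of each map $\psi \mapsto g\bigl((\rho_n \cdot (\tilde\varphi_n^{-1}\circ\psi))_\sim\bigr)$. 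Fixing a basis $(v_i)$ of the finite-dimensional space $V_\frak{g}$, writing each $\rho_n v_i$ as a finite sum $\sum_j \kappa_{\frak{g}*}(\xi_{ij},\zeta_{ij})$ via Lemma \ref{alguni}, and pulling scalar multiplication by the components of $\psi$ into the first argument by Lemma \ref{Cent1} reduces this to a finite sum of $\kappa^T_{C^\infty_c(M,\frak{K})}$-terms, each continuous in $\psi$.

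The main obstacle is the global algebraic-universality step: patching the local algebraic universal forms coherently requires checking independence of the choices of cover, trivialisations, and partition of unity, and it is precisely there that Lemma \ref{Cent1} and the perfectness of $C^\infty_c(M,\frak{K})$ come in. Once algebraic universality is secured, the remainder is a bundle-valued transcription of the local argument, with the only new ingredient being the obvious bundle analogue of the quotient map $\Phi$.
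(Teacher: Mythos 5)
Your proposal is correct in outline, but it follows a genuinely different route from the paper. The paper never passes through the abstract universal object $V^T_{C_c^\infty(M,\frak{K})}$ for the global statement: given a continuous invariant form $\gamma$ on $C_c^\infty(M,\frak{K})$, it localises $\gamma$ to forms $\gamma_i$ on $C_c^\infty(U_{\psi_i},\frak{g})$ via the trivialisations, applies the \emph{local topological} universality (Theorem \ref{local uni}) on each chart to get continuous maps $\beta_i$, and glues these to a continuous $\beta = \sum_i \beta_i\bigl((\rho_i\cdot{})_{\tilde{\psi_i}}\bigr)$ factoring $\gamma$; the identity $\beta\circ\kappa_\frak{K}=\gamma$ is then verified with bump functions $\zeta_i$, Lemma \ref{Cent1} and perfectness, and uniqueness comes from Lemma \ref{Span}. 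You instead first establish \emph{algebraic} universality of $\kappa_\frak{K}$ (which is essentially the same gluing argument with the continuity checks stripped out), then run the $f$--$g$ sandwich against $\kappa^T_{C_c^\infty(M,\frak{K})}$ exactly as in Theorem \ref{local uni}, reducing everything to the continuity of the single map $g$, which you obtain from a bundle-valued analogue of the quotient map of Lemma \ref{QuotientenAbb}. Both work; the paper's version is more economical because it reuses Theorem \ref{local uni} as a black box and needs no new quotient map, while yours isolates the algebraic statement (the compactly supported analogue of \cite[Theorem 4.4.4]{Guendogan}) as an intermediate result of independent interest, at the cost of essentially redoing the quotient-map machinery at the bundle level. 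Three small corrections: the identity $\psi_i\cdot\kappa_\frak{K}(\xi,\zeta)=\kappa_\frak{K}(\psi_i\xi,\zeta)$ you need when expanding $g$ is just fibrewise bilinearity of $\kappa_{\frak{K}_p}$, not Lemma \ref{Cent1} (that lemma is what you need in the algebraic-universality verification, to move the cutoff $\zeta_i$ between the two arguments of $\gamma$, using perfectness); there is no need to check independence of the cover, trivialisations and partition of unity, since uniqueness of the factorising map is automatic from Lemma \ref{Span} once any one construction works; and in the continuity argument for $g$ you must produce the $\xi_{ij},\zeta_{ij}$ with support inside $U_n$ (via Lemma \ref{alguni} applied on the $\sigma$-compact manifold $U_n$, not Lemma \ref{Span} on $M$), since otherwise the product $\psi_i\xi_{ij}$ with the merely locally defined component $\psi_i$ does not make sense.
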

\begin{proof}
Let $(\psi_i \colon \pi^{-1}(U_{\psi_i})\rightarrow U_{\psi_i} \times \frak{g})_{i \in I}$ be a bundle atlas of $\frak{K}$ and $(\rho_i)_{i \in I}$ a partition of unity of $M$  with $\supp(\rho_i) \subseteq U_{\psi_i}$. Let $\gamma \colon C_c^\infty(M,\frak{K})^2 \rightarrow W$ be a continuous invariant bilinear form. For $i \in I$ we define 
\begin{align*}
\gamma_i \colon C_c^\infty (U_{\psi_i}, \frak{g})^2 &\rightarrow W\\
(f,g) & \mapsto \gamma \left( (\psi_i^{-1} \circ (\id , f) )_{\sim} , (\psi_i^{-1} \circ (\id , g))_{\sim} \right).
\end{align*}

The bilinear map $\gamma_i$ is an invariant symmetric bilinear form. We want to show that it is also continuous. Obviously it is enough to show that $C^\infty_c(U_{\psi_i} , \frak{g}) \rightarrow C_c^\infty(M,\frak{K})$, $f \mapsto (\psi^{-1} \circ (\id ,f))_\sim$ is continuous. This follows from the following diagram
\begin{align*}
\begin{xy}
\xymatrixcolsep{5pc}\xymatrix{
C^\infty_c(U_{\psi_i} , \frak{g}) \ar[d]_-{\id} \ar[r]^-{f\mapsto (\psi^{-1} \circ (\id ,f))_\sim}  & C_c^\infty(M,\frak{K}) \ar@{^{(}->}[d]\\
C^\infty(U_{\psi_i}, \frak{g}) \ar[r]_-{\iota_i} & \bigoplus\limits_{j\in I} C^\infty (U_{\psi_j} ,\frak{g}).
}
\end{xy}
\end{align*}

So we can find a continuous linear map $\beta_i \colon C_c^\infty (U_{\psi_i}, V_\frak{g}) \rightarrow W$, such that the diagram 
\begin{align*}
\begin{xy}
\xymatrixcolsep{5pc}\xymatrix{
C_c^\infty (U_{\psi_i} , \frak{g})^2 \ar[r]^-{\gamma_i} \ar[d]_-{{\kappa_\frak{g}}_\ast} & W\\
C_c^\infty ( U_{\psi_i} , V ( \frak{g} ) ) \ar[ur]_-{\beta_i}
}
\end{xy}
\end{align*}
commutes. 

For $i \in I$ let $\tilde{\psi_i}$ be the corresponding bundle-chart of $V(\frak{K})$ that comes from $\psi_i$. We define $\beta \colon C_c^\infty(M,V(\frak{K})) \rightarrow W$, $X \mapsto \sum_{i \in I} \beta_i \big((\rho_i \cdot X)_{\tilde{\psi_i}}\big)$. To see that $\beta$ is continuous it is enough to show that $h \colon \bigoplus_{i \in I} C^\infty(U_{\psi_i}, V_\frak{g}) \rightarrow W$, $(f_i)_{i \in I} \mapsto \sum_{i \in I} \beta_i(\rho_i \cdot f_i)$ is continuous, because $ C_c^\infty (M , V(\frak{K})) \rightarrow \bigoplus_{i\in I} \allowbreak C^\infty (U_{\psi_i} ,\allowbreak V_\frak{g}) $, $X \mapsto X_{\tilde{\psi_i}}$ is a topological embedding. To check the continuity of $h$ it is enough to show the continuity of $C^\infty(U_{\psi_i} , V_\frak{g}) \rightarrow C^\infty_c(U_{\psi_i} , V_\frak{g})$, $f \mapsto \rho_i \cdot f$. The continuity of the latter map is clear, because it takes its image in a subspace $C^\infty_K(U_{\psi_i} , V_\frak{g})$ for a compact set $K \subseteq U_{\psi_i}$.

Let $\zeta_i \colon M \rightarrow [0,1]$ be a smooth map with $\supp(\zeta_i) \subseteq U_{\psi_{i}}$ and $\zeta_i|_{\supp(\rho_i)}=1$ for $i \in I$. With Lemma \ref{Span} in mind we calculate for $X,Y \in C_c^\infty(M,\frak{K})$
\begin{align*}
&\beta(\kappa_\frak{K}(X,Y))  = \sum_{i \in I} \beta_i \left( (\rho_i \zeta_i \kappa_\frak{K}(X,Y))_{\tilde{\psi_i}} \right)  = \sum_{i\in I} \beta_i \left(\kappa_{\frak{K}}(\rho_i X, \zeta_i Y)_{\tilde{\psi_i}}\right)\\
 =& \sum_{i \in I} \beta_i {\kappa_\frak{g}}_\ast \left( (\rho_i X)_{\psi_i}, (\zeta_i Y)_{\psi_i} \right) = \sum_{i\in I} \gamma_i \left((\rho_i X)_{\psi_i}, (\rho_i Y)_{\psi_i}\right) = \sum_{i \in I} \gamma ( \rho_i X , \zeta_i Y)\\
 =& \sum_{i\in I} \gamma ( \underbracket{\zeta_i \rho_i}_{=\rho_i} X , Y) = \gamma(X,Y).
\end{align*}
Here we used that $C_c^\infty(M,\frak{K}) \rightarrow C_c^\infty(M,\frak{K})$, $X \mapsto \zeta_i \cdot X$ is in $\Cent (C_c^\infty(M,\frak{K}))$ and that $C_c^\infty(M,\frak{K})$ is a perfect Lie algebra.

The uniqueness of $\beta$ follows from Lemma \ref{Span}.
\end{proof}

\section{Continuity of a cocycle}\label{cont}

In Theorem \ref{OmegaRes} we recall how the universal continuous invariant symmetric bilinear form on the Lie algebra of compactly supported sections is used in \cite{Bas} to construct a certain cocycle $\omega$  in $Z_{ct}^2(C_{c}^\infty(M,\frak{K}),E)$ for an appropriate locally convex space $E$. But as mentioned in the introduction the construction in \cite{Bas} does not discuss the continuity of the cocycle $\omega$. We show that this cocycle is actually continuous  with the help of Lemma \ref{beta} and Theorem \ref{d}. 

\bigskip

In the following Definition \ref{TopOmega} we equip the space of $k$-forms respectively the compactly supported $k$-forms on a $\sigma$-compact finite-dimensional manifold with the usual topology, as, e.g., in \cite[Definition 5.2.6]{Guendogan}.
\begin{definition}\label{TopOmega}
Let $M$ be a finite-dimensional $\sigma$-compact manifold, $\pi \colon \mathbb{V} \rightarrow M$ a vector bundle with base $M$ and $k \in \mathbb{N}$. We give $\Omega^k(M,\mathbb{V})$ the induced topology of $C^\infty((TM)^k , \mathbb{V})$. The subspace $\Omega^k (M, \allowbreak \mathbb{V})$ is closed in $C^\infty((TM)^k, \allowbreak \mathbb{V})$, because convergence $C^\infty((TM)^k,\mathbb{V})$ implies pointwise convergence. Hence $\Omega^k(M,\mathbb{V})$ becomes a locally convex space. If the typical fiber of $\mathbb{V}$ is a Fr{\'e}chet-space, it even is a Fr{\'e}chet-space. If $K \subseteq M$ is compact, we give $\Omega^k_K (M,\mathbb{V}):= \{\omega \in \Omega^k (M,\mathbb{V}): \supp (\omega) \subseteq K\}$ the induced topology of $\Omega^k(M,\mathbb{V})$, so that $\Omega^k_K(M, \mathbb{V})= \bigcap_{p\in M\setminus K, v \in (T_pM)^k} \ev_{p,v}^{-1}(\{0\})$ with the point evaluation $\ev_{p,v} \colon \Omega^k(M,\mathbb{V}) \rightarrow \mathbb{V}_p$, $\omega \mapsto \omega_p(v)$, becomes a closed subspace of $\Omega^k(M,\mathbb{V})$ and by this a locally convex space. As usual we write $\Omega_c^k(M,\mathbb{V})$ for the space of compactly supported $k$-forms and equip it with the topology, such that it becomes the inductive limit of the spaces $\Omega^k_K(M, \mathbb{V})$.
\end{definition}

We now want to 
fix our notation concerning
$k$-forms and connections and recall some basic facts. All this is well known, for instance see \cite{Darling} and \cite[Chapter 2.2. and 2.3.]{Guendogan}.
\begin{definition}
Let $M$ be a finite-dimensional $\sigma$-compact manifold, $\mathbb{V}$ a vector bundle with base $M$, $\frak{K}$ a Lie-algebra-bundle with base $M$ and $k \in \mathbb{N}$.
\begin{compactenum}[(a)]
\item The space $\Omega^k_c(M,\mathbb{V})$ becomes a $C^\infty (M,\mathbb{R})$-module by the multiplication $C^\infty(M,\mathbb{R}) \times \Omega^k_c(M,\mathbb{V}) \rightarrow \Omega^k_c(M,\mathbb{V})$, $(f,\omega)  \mapsto f\cdot \omega$ with $(f\cdot \omega)_p = f(p) \cdot \omega_p$.
\item We get a bilinear map $\Omega^k_c(M,\mathbb{R}) \times C^\infty(M,\mathbb{V}) \rightarrow \Omega_c^k(M,\mathbb{V})$ $(\omega , \eta)  \mapsto \omega \cdot \eta$ with $ (\omega \cdot \eta)_p(v_1,..,v_k) = \omega_p(v_1,..,v_k) \cdot \eta(p)$ for $v_i \in T_pM$. 
\item We call a $\mathbb{R}$-linear map $d \colon C_c^\infty(M,\mathbb{V}) \rightarrow \Omega_c^1 (M,\mathbb{V})$ {\it Koszul connection}, if $d(f\eta) = fd\eta + \eta df$ for all $\eta\in C^\infty(M,\mathbb{V})$ and $f \in C^\infty (M,\mathbb{R})$.



\item We define the continuous $C^\infty(M,\mathbb{R})$-linear map $C_c^\infty(M,\frak{K}) \times \Omega_c^1(M,\frak{K}) \rightarrow \Omega^1_c(M,\frak{K})$, $(\eta,\omega) \mapsto [\eta,\omega]$ with $([\eta,\omega])_p(v) = [\eta(p),\omega_p(v)]$. Moreover we set $[\omega,\eta] = -[\eta,\omega]$.
\item We call a Koszul connection $D\colon C_c^\infty(M,\frak{K}) \rightarrow \Omega^1_c (M,\frak{K})$ a {\it Lie connection}, if $D[\eta,\tau] = [D\eta,\tau] + [\eta,D\tau]$ for  $\eta,\tau \in C_c^\infty(M,\frak{K})$.
\end{compactenum}
\end{definition}

The following fact in Lemma \ref{LieConnection} should be part of the folklore. For instance see \cite[Remark 2.3.14.]{Guendogan}.
\begin{lemma}\label{LieConnection}
For every finite-dimensional $\sigma$-compact manifold $M$ and Lie-algebra-bundle $\pi \colon \frak{K} \rightarrow M$, there exists a continuous Lie connection \\$D \colon C_c^\infty(M,\frak{K}) \rightarrow \Omega^1_c(M,\frak{K})$.
\end{lemma}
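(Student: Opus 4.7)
The plan is to patch together local Lie connections via a partition of unity; the key observation is that both the Leibniz identity of a Koszul connection and the derivation identity of a Lie connection are affine in the connection, so both survive partition-of-unity averages. Concretely, I choose a locally finite bundle atlas $(\varphi_i \colon \pi^{-1}(U_i) \to U_i \times \frak{g})_{i \in I}$ of $\frak{K}$ together with a subordinate partition of unity $(\rho_i)_{i \in I}$, $\supp(\rho_i) \subseteq U_i$. On each $U_i$ I define a local Lie connection $D_i \colon C^\infty(U_i, \frak{K}|_{U_i}) \to \Omega^1(U_i, \frak{K}|_{U_i})$ by transporting the ordinary exterior derivative through $\varphi_i$: for $\eta \in C^\infty(U_i, \frak{K}|_{U_i})$ with local representation $\eta_{\varphi_i} := \pr_2 \circ \varphi_i \circ \eta \in C^\infty(U_i, \frak{g})$, let $D_i(\eta)$ be the 1-form whose $\varphi_i$-representation equals $d \eta_{\varphi_i} \in \Omega^1(U_i, \frak{g})$. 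The Koszul identity for $D_i$ is the classical Leibniz rule for $d$, and the Lie identity reduces to $d[\eta_{\varphi_i}, \tau_{\varphi_i}] = [d\eta_{\varphi_i}, \tau_{\varphi_i}] + [\eta_{\varphi_i}, d\tau_{\varphi_i}]$, which holds because $d$ commutes with pointwise bilinear operations and $\varphi_i$ is fibrewise a Lie algebra isomorphism.

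I then set
\begin{align*}
D\eta := \sum_{i \in I} \bigl(\rho_i \cdot D_i(\eta|_{U_i})\bigr)_{\sim}, \qquad \eta \in C^\infty_c(M, \frak{K}).
\end{align*}
By local finiteness of $(U_i)_{i \in I}$ and $\supp(\rho_i) \subseteq U_i$, the sum is locally finite and each summand extends smoothly by $0$, so $D\eta \in \Omega^1_c(M, \frak{K})$. Using $\sum_i \rho_i = 1$ together with the $C^\infty(M, \mathbb{R})$-bilinearity of the module action and the bracket on $\Omega^1_c(M, \frak{K})$, a pointwise calculation yields $D(f\eta) = f D\eta + \eta \, df$ and $D[\eta, \tau] = [D\eta, \tau] + [\eta, D\tau]$: the Koszul and Lie identities for the individual $D_i$ produce the analogous identities after multiplication by $\rho_i$, and the resulting $\eta\,df$- and bracket-cross-terms reassemble to the desired global expressions via $\sum_i \rho_i = 1$.

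For continuity, it suffices by the LF-topology on $C^\infty_c(M, \frak{K})$ to fix a compact set $K \subseteq M$ and verify that $D \colon C^\infty_K(M, \frak{K}) \to \Omega^1_c(M, \frak{K})$ is continuous. Local finiteness of $(\supp \rho_i)_{i \in I}$ restricts the defining sum to finitely many indices, so $D|_{C^\infty_K(M,\frak{K})}$ is a finite sum of maps, each of which factors as the composition of the restriction $C^\infty_K(M, \frak{K}) \to C^\infty(U_i, \frak{K}|_{U_i})$ (continuous by the initial-topology definitions in Definition \ref{TopFuerBuendel}), the map $D_i$ (continuous since in the trivialization $\varphi_i$ it is just the exterior derivative $d \colon C^\infty(U_i, \frak{g}) \to \Omega^1(U_i, \frak{g})$), multiplication by $\rho_i$ landing in $\Omega^1_{\supp \rho_i}(U_i, \frak{K}|_{U_i})$, and extension-by-zero into $\Omega^1_c(M, \frak{K})$. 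I expect the main obstacle to be precisely this last step, i.e.\ showing that the zero-extension $\Omega^1_{\supp \rho_i}(U_i, \frak{K}|_{U_i}) \to \Omega^1_{\supp \rho_i}(M, \frak{K}) \hookrightarrow \Omega^1_c(M, \frak{K})$ is continuous; via the initial-topology description in Definition \ref{TopOmega} this reduces to the routine fact that a compactly supported form on an open set extends smoothly by $0$ and that the extension respects the defining initial topology.
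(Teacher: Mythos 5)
Your proof is correct and is precisely the standard partition-of-unity argument that the paper itself omits, deferring instead to the folklore reference \cite[Remark 2.3.14.]{Guendogan}: local Lie connections given by the exterior derivative in each trivialisation, glued with a subordinate partition of unity, with both the Koszul and Lie identities surviving because the relevant module action and bracket are $C^\infty(M,\mathbb{R})$-bilinear. The step you single out as the main obstacle --- continuity of extension by zero into $\Omega^1_c(M,\frak{K})$ --- is indeed the only nontrivial technical point, and it goes through as you describe.
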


\begin{lemma}\label{beta}
If $M$ is a finite-dimensional $\sigma$-compact manifold and $\pi \colon \frak{K} \rightarrow M$ a Lie-algebra-bundle with finite-dimensional typical fiber, then we define the map 
\begin{align*}
\tilde{\kappa}_{\frak{K}} \colon \Omega_c^1(M,\frak{K}) \times C_c^\infty(M,\frak{K}) \rightarrow \Omega_c^1(M, V(\frak{K}))
\end{align*}
by $(\tilde{\kappa}_{\frak{K}}(\omega,\eta))_p(v) =  \kappa_{\frak{K}_p}(\omega_p(v) ,\eta(p))$. The map $\tilde{\kappa}_\frak{K}$ is $C^\infty(M,\mathbb{R})$-bilinear and continuous. If moreover $D\colon C_c^\infty(M,\frak{K}) \rightarrow \Omega^1_c(M,\frak{K})$ is a continuous Lie connection, then 
\begin{align*}
\beta \colon C_c^\infty(M,\frak{K})^2 \rightarrow \Omega_c^1(M,V(\frak{K})),~ (\zeta,\eta) \mapsto \tilde{\kappa}_{\frak{K}}(D\zeta,\eta) + \tilde{\kappa}_{\frak{K}} (\zeta , D\eta)  
\end{align*}
is a continuous, invariant, symmetric bilinear form. 
\end{lemma}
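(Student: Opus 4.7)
The plan is to check each claim in turn, reducing everything either to pointwise identities in the fibers $\frak{K}_p$ or to continuity statements that follow from the $\Omega$-Lemma.

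For $\tilde{\kappa}_\frak{K}$, $C^\infty(M,\mathbb{R})$-bilinearity is immediate from the $\mathbb{R}$-bilinearity of each fiberwise form $\kappa_{\frak{K}_p}$ and from the pointwise nature of the $C^\infty(M,\mathbb{R})$-actions on both $\Omega^1_c(M,\frak{K})$ and $C^\infty_c(M,\frak{K})$. For continuity, I would view a 1-form $\omega \in \Omega^1_c(M,\frak{K})$ as a smooth map $TM \to \frak{K}$ over $M$ (as in Definition \ref{TopOmega}), and observe that the bundle map $\Phi\colon \frak{K} \oplus \frak{K} \to V(\frak{K})$, $(v,w) \mapsto \kappa_{\frak{K}_p}(v,w)$ for $v,w \in \frak{K}_p$, is smooth; this is the very bundle map already used in the continuity proof for $\kappa_\frak{K}$ earlier. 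Then $\tilde{\kappa}_\frak{K}(\omega,\eta)$ is obtained as the composition $\Phi \circ (\omega, \eta \circ \pi_{TM})$ read as a 1-form with values in $V(\frak{K})$, and continuity of the induced map on sections follows from the $\Omega$-Lemma (\cite[Theorem 8.7]{Michor} or \cite[F.24]{Gloeckner}).

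Bilinearity of $\beta$ is clear, and continuity of $\beta$ follows from that of $D$ composed with $\tilde{\kappa}_\frak{K}$, once one interprets the mild abuse of notation $\tilde{\kappa}_\frak{K}(\zeta,D\eta) := \tilde{\kappa}_\frak{K}(D\eta,\zeta)$ (which is justified by the fiberwise symmetry of $\kappa_{\frak{K}_p}$). Symmetry of $\beta$ is then immediate: $\beta(\zeta,\eta) = \tilde{\kappa}_\frak{K}(D\zeta,\eta) + \tilde{\kappa}_\frak{K}(D\eta,\zeta)$, which is manifestly symmetric in $\zeta$ and $\eta$. For invariance, I would compute $\beta([\zeta,\tau],\eta)$ using the Lie connection property $D[\zeta,\tau] = [D\zeta,\tau] + [\zeta,D\tau]$, producing three terms of the shape $\tilde{\kappa}_\frak{K}([\cdot,\cdot],\cdot)$ or its swapped variant. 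Each such term can be converted to the corresponding term in $\beta(\zeta,[\tau,\eta])$ by the pointwise invariance of $\kappa_{\frak{K}_p}$: for example,
$$\tilde{\kappa}_\frak{K}([D\zeta,\tau],\eta)_p(v) = \kappa_{\frak{K}_p}([D\zeta_p(v),\tau(p)],\eta(p)) = \kappa_{\frak{K}_p}(D\zeta_p(v),[\tau(p),\eta(p)]) = \tilde{\kappa}_\frak{K}(D\zeta,[\tau,\eta])_p(v).$$
Matching the three terms on each side then gives $\beta([\zeta,\tau],\eta) = \beta(\zeta,[\tau,\eta])$.

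The argument is mostly bookkeeping once the right fiberwise identities (symmetry and invariance of $\kappa_{\frak{K}_p}$, and the Leibniz rule for $D$) are isolated. The main technical obstacle is phrasing the continuity of $\tilde{\kappa}_\frak{K}$ via the $\Omega$-Lemma: because a 1-form is a map out of $TM$ rather than out of $M$, one has to pull back the section $\eta$ along $\pi_{TM}$ before composing with the fiberwise bundle map $\Phi$, so the reduction is slightly more delicate than the analogous continuity proof for $\kappa_\frak{K}$ earlier in the paper.
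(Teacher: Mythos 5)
Your proposal is correct and follows the same overall strategy as the paper: reduce everything to the fiberwise properties of $\kappa_{\frak{K}_p}$, get continuity of $\tilde{\kappa}_{\frak{K}}$ from the $\Omega$-Lemma, and verify invariance by the pointwise computation combining the Leibniz rule $D[\eta_1,\eta_2]=[D\eta_1,\eta_2]+[\eta_1,D\eta_2]$ with the invariance of $\kappa_{\frak{K}_p}$; your invariance calculation is exactly the one in the paper. The one place where you genuinely diverge is the continuity of $\tilde{\kappa}_{\frak{K}}$: you treat a $1$-form as a map out of $TM$ and pull $\eta$ back along $\pi_{TM}$ before composing with the bundle map $\Phi$, which, as you yourself note, does not fit the cited $\Omega$-Lemma cleanly --- the composite lives over $TM$, the pullback $\eta\circ\pi_{TM}$ is never compactly supported on $TM$, and one would have to rework the support system and the inductive limit topology there. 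The paper sidesteps all of this by using the identifications $\Omega^1_c(M,\frak{K})\cong C^\infty_c(M,T^\ast M\otimes\frak{K})$ and $\Omega^1_c(M,V(\frak{K}))\cong C^\infty_c(M,\Lin(TM,V(\frak{K})))$, so that $\tilde{\kappa}_{\frak{K}}$ is induced by the fiberwise bilinear bundle map $(T^\ast M\otimes\frak{K})\oplus\frak{K}\rightarrow\Lin(TM,V(\frak{K}))$, $(\lambda\otimes v,w)\mapsto\kappa_{\frak{K}_p}(\lambda(\_)\cdot v,w)$ \emph{over the base $M$}, to which the $\Omega$-Lemma applies verbatim, exactly as in the earlier continuity proof for $\kappa_{\frak{K}}$. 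If you adopt that identification, the ``delicate'' step you flag disappears; as written, your continuity argument would need the extra justification you allude to but do not supply.
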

\begin{proof}
To show the continuity of $\beta$ we only have to prove that $\tilde{K}_{\frak{K}}$ is continuous. The map 
\begin{align*}
(T^\ast M \otimes \frak{K}) \oplus \frak{K} \rightarrow \Lin(TM, V(\frak{K})), (\lambda \otimes v ,w) \mapsto \kappa_{\frak{K}_p}(\lambda (\_) \cdot v,w)
\end{align*}
is continuous. With the identifications $\Omega^1_c(M,\frak{K})\cong C^\infty_c(M,T^\ast M \otimes \frak{K})$ and $\Omega^1_c(M,V(\frak{K}))\cong C^\infty_c(M, \Lin(TM, V(\frak{K})))$ the continuity follows from the $\Omega$-Lemma (see, e.g. \cite[Theorem 8.7]{Michor} or \cite[F.24]{Gloeckner}).

We show that $\beta$ is invariant: 
\begin{align*}
&\beta([\eta_1,\eta_2],\eta_3)_p(v) = \kappa ((D[\eta_1,\eta_2])_p(v),\eta_3(p)) + \kappa( [\eta_1(p) , \eta_2(p) ] , (D \eta_3)_p(v))\\
=& \kappa ([(D\eta_1)_p(v) , \eta_2(p)] , \eta_3(p))  + \kappa ([\eta_1 (p) , (D \eta_2)_p(v)], \eta_3(p))\\
&+ \kappa( [\eta_1(p) , \eta_2(p) ] , (D \eta_3)_p(v) ) \\
= &\kappa ( (D\eta_1)_p(v) , [\eta_2(p) ,\eta_3(p) ]) + \kappa ( \eta_1 (p) , [ (D\eta_2)_p(v) , \eta_3(p)])\\
&+ \kappa (\eta_1 (p) , [ \eta_2(p) , (D\eta_3)_p(v) ])\\
=& \kappa ( (D\eta_1)_p(v) , [\eta_2(p) ,\eta_3(p) ]) + \kappa (\eta_1(p) , D[\eta_2, \eta_3]_p(v)) = \beta (\eta_1 ,[\eta_2,\eta_3])_p(v)
\end{align*}

The rest of the statement is clear.
\end{proof}

As in \cite{Bas} we construct a Koszul connection for the vector bundle $C_c^\infty(M,V \allowbreak (\frak{K}))$. This is also the point where we apply Theorem \ref{GlobTopUn}, to see that the Koszul connection is actually continuous. Moreover we need the continuity of the map $\beta$ from Lemma \ref{beta}. 
\begin{theorem}\label{d}
Let $M$ be a finite-dimensional $\sigma$-compact manifold, $\pi \colon \frak{K} \rightarrow M$ a Lie-algebra-bundle with perfect, finite-dimensional typical fiber $\frak{g}$, $D\colon C_c^\infty \allowbreak (M,\frak{K}) \rightarrow \Omega^1_c(M,\frak{K})$ a continuous Lie connection and $\beta$ as in Lemma \ref{beta} a continuous, invariant, symmetric bilinear form. Then there exists a unique continuous Koszul connection $d\colon C_c^\infty(M,V(\frak{K})) \rightarrow \Omega_c^1(M , \frak{K})$ such that the diagram 
\begin{align*}
\begin{xy}
\xymatrix{
C_c^\infty(M,\frak{K})^2 \ar[r]^-{\beta} \ar[d]_-{\kappa_{\frak{K}}} & \Omega_c^1 (M,V(\frak{K}))\\
C_c^\infty(M,V(\frak{K}))\ar[ur]_-{d}
}
\end{xy}
\end{align*}
commutes.
\end{theorem}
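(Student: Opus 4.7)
The plan is to apply the topological universal property of $\kappa_{\frak{K}}$ established in Theorem \ref{GlobTopUn} to produce $d$, and then verify by direct computation that $d$ satisfies the Leibniz rule. Since $\beta$ is a continuous invariant symmetric bilinear form by Lemma \ref{beta}, Theorem \ref{GlobTopUn} yields a unique continuous linear map $d \colon C^\infty_c(M, V(\frak{K})) \rightarrow \Omega^1_c(M, V(\frak{K}))$ with $d \circ \kappa_{\frak{K}} = \beta$. Uniqueness of $d$ as a Koszul connection is then immediate from the uniqueness clause in the universal property.

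The real work is showing that this $d$ is a Koszul connection, i.e.\ that $d(f\eta) = f \cdot d\eta + df \cdot \eta$ for every $f \in C^\infty(M,\mathbb{R})$ and every $\eta \in C^\infty_c(M, V(\frak{K}))$. By Lemma \ref{Span} every such $\eta$ is a finite linear combination of elements of the form $\kappa_{\frak{K}}(X,Y)$ with $X,Y \in C^\infty_c(M,\frak{K})$, so by $\mathbb{R}$-linearity of $d$ it suffices to verify the Leibniz rule on such elements. Here we use the fiberwise bilinearity of $\kappa_{\frak{K}_p}$ in the form $\kappa_{\frak{K}}(fX,Y) = f \cdot \kappa_{\frak{K}}(X,Y)$ (this is a pointwise identity in the Lie algebra fibers), which gives
\begin{align*}
d(f \cdot \kappa_{\frak{K}}(X,Y)) \;=\; d(\kappa_{\frak{K}}(fX,Y)) \;=\; \beta(fX,Y).
\end{align*}

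Unfolding $\beta(fX,Y) = \tilde{\kappa}_{\frak{K}}(D(fX),Y) + \tilde{\kappa}_{\frak{K}}(fX,DY)$ and using that $D$ is a Koszul connection, $D(fX) = f \cdot DX + df \cdot X$, we obtain
\begin{align*}
\beta(fX,Y) \;=\; f\bigl(\tilde{\kappa}_{\frak{K}}(DX,Y) + \tilde{\kappa}_{\frak{K}}(X,DY)\bigr) + \tilde{\kappa}_{\frak{K}}(df \cdot X,Y),
\end{align*}
where I invoke the $C^\infty(M,\mathbb{R})$-bilinearity of $\tilde{\kappa}_{\frak{K}}$ from Lemma \ref{beta}. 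The first summand is $f \cdot \beta(X,Y) = f \cdot d(\kappa_{\frak{K}}(X,Y))$. For the second summand the definition of $\tilde{\kappa}_{\frak{K}}$ gives pointwise $\tilde{\kappa}_{\frak{K}}(df \cdot X,Y)_p(v) = df_p(v) \cdot \kappa_{\frak{K}_p}(X(p),Y(p)) = (df \cdot \kappa_{\frak{K}}(X,Y))_p(v)$, so $\tilde{\kappa}_{\frak{K}}(df \cdot X,Y) = df \cdot \kappa_{\frak{K}}(X,Y)$. Combining, $d(f \cdot \kappa_{\frak{K}}(X,Y)) = f \cdot d(\kappa_{\frak{K}}(X,Y)) + df \cdot \kappa_{\frak{K}}(X,Y)$, which is the Leibniz rule on the spanning set; extending linearly in $\eta$ finishes the verification.

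The main obstacle is the bookkeeping in the Leibniz verification: one must carefully match the three different pieces of multiplicative structure ($\kappa_{\frak{K}}$ being fiberwise bilinear, $\tilde{\kappa}_{\frak{K}}$ being $C^\infty(M,\mathbb{R})$-bilinear, and $D$ being a Koszul connection) so that the $df \cdot X$ term produced by $D$ lands in the correct form $df \cdot \kappa_{\frak{K}}(X,Y)$. Once those identifications are made the computation is direct; nothing further needs to be proved since continuity of $d$ and its uniqueness among continuous linear maps are supplied by Theorem \ref{GlobTopUn}.
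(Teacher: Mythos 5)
Your proposal is correct and follows essentially the same route as the paper: obtain $d$ as the unique continuous linear map from the universal property of $\kappa_{\frak{K}}$ (Theorem \ref{GlobTopUn}), then verify the Leibniz rule on the spanning set $\kappa_{\frak{K}}(X,Y)$ by expanding $\beta(fX,Y)$ with the Koszul property of $D$ and the bilinearity of $\tilde{\kappa}_{\frak{K}}$. The computation you give matches the paper's pointwise calculation step for step.
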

\begin{proof}
With Theorem \ref{GlobTopUn} we find a unique continuous $\mathbb{R}$-linear map $d\colon C_c^\infty(M, \allowbreak V(\frak{K})) \rightarrow \Omega_c^1(M , \frak{K})$ such that the above diagram commutes. Now we have to show that $d(f \cdot \eta) = df \cdot \eta + f \cdot d\eta$ holds for all $f\in C^\infty(M,\mathbb{R})$ and $\eta \in C_c^\infty(M,V(\frak{K}))$. Because the image of $\kappa_{\frak{K}}$ spans $C^\infty(M,V(\frak{K}))$, it is sufficient to show the assertion for $\eta = \kappa_{\frak{K}}(\xi,\zeta)$ with $\xi,\zeta\in  C_c^\infty(M,V(\frak{K}))$.
\begin{align*}
&d(f\cdot \kappa_{\frak{K}}(\xi,\zeta))_p(v) = (d(\kappa_{\frak{K}}(f\xi,\zeta)))_p(v)\\
=& \kappa (D(f \cdot \xi)_p(v) , \zeta(p)) + \kappa (f(p)\cdot \xi(p), (D \zeta)_p(v))\\
=& \kappa (df(v) \cdot \xi(p) , \zeta(p)) + \kappa ( f(p) (D\xi)_p(v) , \zeta(p) ) + \kappa (f(p) \xi(p) , D\zeta_p(v))\\
=& df(v) \cdot \kappa(\xi(p) , \zeta(p) ) + f(p) \kappa (D\xi_p(v) , \zeta(p)) + f(p) \kappa(\xi(p) , D\zeta_p(v))\\
=& (df \cdot \kappa (\xi ,\zeta))_p(v) + f(p) \cdot (\beta(\xi,\zeta))_p(v) \\
=& (df \cdot \kappa (\xi ,\zeta))_p(v) + f(p) \cdot (d\kappa(\xi,\zeta))_p(v).
\end{align*}
\end{proof}

In Theorem \ref{OmegaRes} we now repeat the construction of the cocycle from \cite{Bas} and find with Theorem \ref{d} that this cocycle is actually continuous.
\begin{theorem}\label{OmegaRes}
Let $M$ be a finite-dimensional $\sigma$-compact manifold, $\pi \colon \frak{K} \rightarrow M$ a Lie-algebra-bundle with perfect, finite-dimensional typical fiber $\frak{g}$ and $D\colon C_c^\infty(M,\frak{K}) \rightarrow \Omega^1_c(M,\frak{K})$ a continuous Lie connection. We use the Koszul connection $d \colon C_c^\infty(M,V(\frak{K})) \rightarrow \Omega^1_c(M,V(\frak{K}))$ constructed in Theorem \ref{d}, write $\overline{\Omega}_c^1(M,V(\frak{K})) := \Omega_c^1(M,V(\frak{K}))/(dC_c^\infty(M,V(\frak{K})))$ and define the map
\begin{align*}
\omega \colon C_c^\infty(M,\frak{K})^2 \rightarrow \overline{\Omega}_c^1(M,V(\frak{K})),~ (\eta,\zeta) \mapsto [\tilde{\kappa}_{\frak{K}}(D\eta ,  \zeta)]
\end{align*}
with the help of $\tilde{\kappa}_{\frak{K}}$ of Lemma \ref{d}. Then $\omega$ is a cocycle and also continuous, i.e., $\omega \in H^2_{ct}(C_c^\infty(M \allowbreak , \frak{K}), \allowbreak \overline{\Omega}_c^1(M \allowbreak ,V(\frak{K})))$.
\end{theorem}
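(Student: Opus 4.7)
The plan is to split the statement into continuity and the algebraic cocycle property, with continuity being the genuinely new ingredient and the cocycle identity essentially being the argument already made in \cite{Bas}.

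For continuity, my approach is to exhibit $\omega$ as a composition of three continuous maps. The map
\begin{align*}
C_c^\infty(M,\frak{K})^2 \longrightarrow \Omega_c^1(M,\frak{K}) \times C_c^\infty(M,\frak{K}),\quad (\eta,\zeta) \mapsto (D\eta,\zeta),
\end{align*}
is continuous because $D$ is continuous by hypothesis. By Lemma \ref{beta}, $\tilde{\kappa}_\frak{K}$ is continuous on this product, so $(\eta,\zeta)\mapsto \tilde{\kappa}_\frak{K}(D\eta,\zeta)$ is a continuous map into $\Omega_c^1(M,V(\frak{K}))$. Finally, the canonical projection onto $\overline{\Omega}_c^1(M,V(\frak{K}))$ is continuous by definition of the quotient topology. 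Composing these three maps yields $\omega$, which is therefore continuous.

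For antisymmetry, the decisive observation is that Theorem \ref{d} gives $d\kappa_\frak{K}(\eta,\zeta) = \beta(\eta,\zeta) = \tilde{\kappa}_\frak{K}(D\eta,\zeta) + \tilde{\kappa}_\frak{K}(\eta,D\zeta)$, so this form lies in $dC_c^\infty(M,V(\frak{K}))$ and vanishes in the quotient. Combined with the pointwise symmetry $\kappa_{\frak{K}_p}(a,b) = \kappa_{\frak{K}_p}(b,a)$, which means $\tilde{\kappa}_\frak{K}(\eta,D\zeta)$ and $\tilde{\kappa}_\frak{K}(D\zeta,\eta)$ agree as $1$-forms, this immediately yields $\omega(\eta,\zeta) + \omega(\zeta,\eta) = [\beta(\eta,\zeta)] = 0$ in $\overline{\Omega}_c^1(M,V(\frak{K}))$.

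For the cocycle identity $\sum_{\mathrm{cyc}} \omega([\eta_1,\eta_2],\eta_3) = 0$, I would expand each $D[\eta_i,\eta_j]$ via the Lie-connection property $D[\eta,\tau] = [D\eta,\tau] + [\eta,D\tau]$, then use pointwise invariance of $\kappa_{\frak{K}_p}$ (i.e.\ $\kappa([a,b],c) = \kappa(a,[b,c])$) to shift Lie brackets between the two slots of $\tilde{\kappa}_\frak{K}$. Whenever a term of the form $\tilde{\kappa}_\frak{K}(D\eta_i,[\eta_j,\eta_k]) + \tilde{\kappa}_\frak{K}(\eta_i,D[\eta_j,\eta_k])$ appears, it equals $\beta(\eta_i,[\eta_j,\eta_k])$ and therefore vanishes in $\overline{\Omega}_c^1(M,V(\frak{K}))$. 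After cyclic summation and using antisymmetry of the Lie bracket, the remaining terms cancel.

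The main obstacle I anticipate is the sign bookkeeping in the cyclic-sum computation: one has to keep straight the convention $[\omega,\eta]=-[\eta,\omega]$ for forms and sections and the symmetrised convention used to evaluate $\tilde{\kappa}_\frak{K}$ when its first argument is a section rather than a $1$-form. However, no new ideas are needed there, since the same algebraic identity is already verified in \cite{Bas}; the genuine new content of the statement is that $\omega$ lands in $Z^2_{ct}$ rather than merely in $Z^2$, which is exactly what the three-step continuity argument above secures.
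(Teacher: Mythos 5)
Your proposal is correct and follows essentially the same route as the paper: continuity via the continuity of $D$, of $\tilde{\kappa}_{\frak{K}}$ (Lemma \ref{beta}) and of the quotient projection; antisymmetry from $\tilde{\kappa}_{\frak{K}}(D\eta,\zeta)+\tilde{\kappa}_{\frak{K}}(D\zeta,\eta)=\beta(\eta,\zeta)=d\kappa_{\frak{K}}(\eta,\zeta)\in dC_c^\infty(M,V(\frak{K}))$; and the cocycle identity by expanding $D[\eta_i,\eta_j]$ with the Lie-connection property, shifting brackets via invariance of $\kappa$, and discarding $\beta$-terms modulo exact forms. The only difference is that the paper writes out the short cyclic-sum computation explicitly rather than citing \cite{Bas}, but the mechanism you describe is exactly the one used there.
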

\begin{proof}
The continuity of $\omega$ follows from Lemma  \ref{d}. The $\mathbb{R}$-bilinearity is clear. That $\omega$ is alternating follows from $\tilde{\kappa}_{\frak{K}}(D\eta ,  \xi) + \tilde{\kappa}_{\frak{K}}(D \xi , \eta) \in dC^\infty(M,V(\frak{K}))$. 
Moreover, we have:
\begin{align*}
&\sum\limits_{\sigma \in \mathcal{A}_3} \omega([\eta_{\sigma(1)} , \eta_{\sigma(2)}] , \eta_{\sigma(3)})  = \kappa ([\eta_1 , \eta_2 ], D\eta_3) + \kappa([\eta_2 , \eta_3] , D\eta_1) \\
&+ \kappa ([\eta_3 , \eta_1] , D\eta_2).
\end{align*}
On the other hand we have:
\begin{align*}
&\kappa ([\eta_1 , \eta_2] , D\eta_3) = -\kappa(\eta_3 , D[\eta_1 ,\eta_2]) = -\kappa(\eta_3 , [D\eta_1 ,\eta_2]) - \kappa(\eta_3 , [\eta_1 ,D\eta_2])\\
=& -\kappa([D\eta_1 , \eta_2] , \eta_3) - \kappa ([\eta_3 ,\eta_1] , D\eta_2) = -\kappa([\eta_2 , \eta_3] , D\eta_1) - \kappa ([\eta_3 ,\eta_1] , D\eta_2).
\end{align*}
This finishes the proof.
\end{proof}

\section{Universal continuous extensions of certain current algebras}\label{co}
Maier constructed in \cite{Maier} a universal cocycle for current algebras with a unital complete locally convex algebra. In \cite[Theorem II.7]{Bas} Janssens and  Wockel showed that this cocycle works also for the algebra of compactly supported functions on a $\sigma$-compact finite-dimensional manifold. 
G{\"u}ndo\u{g}an showed in \cite{Guendogan} that this approach also works for a certain class of locally convex pseudo-unital algebras. But the class of locally convex algebras he considers does not contain the compactly supported functions on a $\sigma$-compact finite-dimensional manifold. Our aim in this section is to use the ideas from \cite{Guendogan} to show that the cocycle constructed in \cite{Bas} respectively \cite{Maier} actually works, in a topological sense for a class of locally convex algebras {\it without unity} that contains the compactly supported smooth functions on a $\sigma$-compact algebra.

\bigskip

In Definition \ref{Aog} we first recall the basic concept of current algebras that should be part of the folklore. 
\begin{definition}\label{Aog}
If $A$ is a commutative  pseudo unital algebra and $\frak{g}$ a finite-dimensional Lie algebra, we endow the tensor product $A \otimes \frak{g}$ with the unique Lie bracket that satisfies $[a\otimes x ,b\otimes y] = ab \otimes [x,y]$ for $a,b \in A$ and $x,y \in \frak{g}$. We endow $A \otimes \frak{g}$ with the topology of the projective tensor product of locally convex spaces. This Lie algebra is even a locally convex algebra as one can see in \cite[Remark 2.1.9.]{Guendogan}.
Moreover \cite[Remark 4.2.7]{Guendogan} tells us that $A\otimes \frak{g}$ is perfect if $\frak{g}$ is so.
\end{definition}



\begin{definition}
Let $\frak{g}$ be a locally convex Lie algebra and $V$ a complete locally convex space that is a trivial $\frak{g}$-module and $\omega\in Z^2_{ct}(\frak{g},V)$ a cocycle.  We call $\omega$  {\it weakly universal} for complete locally convex spaces if for every complete locally convex space $W$ considered as trivial $\frak{g}$-module, the map $\delta_W\colon \Lin_{ct}(V,W)\rightarrow H_{ct}^2(\frak{g},W)$, $\theta \mapsto [\theta\circ \omega]$ is bijective.
\end{definition}

The following Lemma \ref{schwach}, can be found in \cite[Lemma 1.12 (iii)]{Neeb}.
\begin{lemma}\label{schwach}
Let $\frak{g}$ be a locally convex Lie algebra and $V$ a complete locally convex space that is a trivial $\frak{g}$-module and $\omega\in Z^2_{c}(\frak{g},V)$ a weak universal cocycle. If $\frak{g}$ is topological perfect, then $\omega$ is universal for all complete locally convex spaces in the classic sense. 
\end{lemma}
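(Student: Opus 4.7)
The plan is to unpack what ``universal in the classical sense'' adds to weak universality and then see that the extra content is precisely what topological perfection provides. Classically, saying that $\omega\in Z^2_{ct}(\mathfrak g,V)$ is universal means that for every complete locally convex space $W$ (trivial $\mathfrak g$-module) and every $\eta\in Z^2_{ct}(\mathfrak g,W)$ there exists a \emph{unique} pair $(\theta,\ell)$ consisting of a continuous linear map $\theta\colon V\to W$ and a continuous linear map $\ell\colon\mathfrak g\to W$ with
\[
\eta(x,y)\;=\;\theta(\omega(x,y))+\ell([x,y])\qquad(x,y\in\mathfrak g),
\]
equivalently, a unique morphism of central extensions $\mathfrak g\oplus_{\omega}V\to\mathfrak g\oplus_{\eta}W$ lifting $\mathrm{id}_{\mathfrak g}$. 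Weak universality, by contrast, only asserts such a statement modulo coboundaries, i.e.\ on the level of $H^2_{ct}$.

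First I would handle the existence of $(\theta,\ell)$. By weak universality, $\delta_W\colon\Lin_{ct}(V,W)\to H^2_{ct}(\mathfrak g,W)$ is surjective, so there is a continuous linear $\theta\colon V\to W$ with $[\theta\circ\omega]=[\eta]$. Unwinding this cohomology identity produces a continuous linear $\ell\colon\mathfrak g\to W$ with $\eta-\theta\circ\omega=d\ell$. For the uniqueness of $\theta$, if $(\theta_1,\ell_1)$ and $(\theta_2,\ell_2)$ both work, then $\theta_1\circ\omega$ and $\theta_2\circ\omega$ represent the same class $[\eta]$, and injectivity of $\delta_W$ (again weak universality) forces $\theta_1=\theta_2$.

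The one step where the hypothesis of topological perfection is really used is the uniqueness of $\ell$ once $\theta$ is fixed. With $\theta_1=\theta_2$ one has $d\ell_1=d\ell_2$, so $(\ell_1-\ell_2)([x,y])=0$ for all $x,y\in\mathfrak g$. The continuous linear map $\ell_1-\ell_2\colon\mathfrak g\to W$ therefore vanishes on $[\mathfrak g,\mathfrak g]$; since $W$ is Hausdorff it vanishes on $\overline{[\mathfrak g,\mathfrak g]}$, which equals $\mathfrak g$ by topological perfection. Hence $\ell_1=\ell_2$.

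In summary, weak universality already hands over existence and determines $\theta$, while topological perfection is precisely the ingredient that rules out the ambiguity in $\ell$ coming from coboundaries. I expect no real obstacle beyond getting the definitions aligned: once one writes down the pair-of-maps description of a morphism of central extensions, the proof reduces to ``density of $[\mathfrak g,\mathfrak g]$ plus Hausdorffness of $W$ kills a continuous linear map that vanishes on commutators.''
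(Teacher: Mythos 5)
Your proof is correct and is essentially the argument behind the result the paper invokes: the paper gives no proof of this lemma, citing \cite[Lemma 1.12 (iii)]{Neeb} instead, and your three steps (existence of the pair $(\theta,\ell)$ from surjectivity of $\delta_W$, uniqueness of $\theta$ from injectivity of $\delta_W$, uniqueness of $\ell$ from continuity plus density of $[\frak{g},\frak{g}]$ in $\frak{g}$) are exactly what that citation supplies. In particular you correctly isolate the only place where topological perfection of $\frak{g}$ enters---killing the ambiguity of $\ell$ on $\overline{[\frak{g},\frak{g}]}=\frak{g}$---which is consistent with the paper's accompanying remark that the weak $\mathbb{K}$-universality hypothesis of Neeb's lemma is not needed for part (iii).
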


\begin{remark}
Actually the lemma in \cite{Neeb} requires the condition that the considered extension is weak universal for the underlying field $\mathbb{K}$ of the vector spaces. But this condition is only used in the proof of statement 1.12 (ii). The proof of statement (iii) neither requires statement 1.12 (ii) nor this condition.
\end{remark}

We remind the reader of the concept of topological universal differential module. This concept is for example developed in \cite[Chapter 5.2]{Guendogan} or \cite{Maier}.
\begin{definition}
For a complete locally convex associative commutative unital algebra $A$, a pair $(E,D)$ with a complete locally convex $A$-module $E$ and continuous derivation $D \colon A \rightarrow E$ of $E$ is called {\it universal topological differential module} of $A$ if we find a unique continuous linear map $\varphi \colon E \rightarrow F$ such that
\begin{align*}
\begin{xy}
\xymatrix{
A \ar[r]^-{T} \ar[d]_-{D}  & F\\
E \ar[ru]_-{\varphi}  &
}
\end{xy}
\end{align*}
commutes for every complete locally convex $A$-module $F$ and continuous $F$-derivation $T \colon A \rightarrow F$. \cite[Chapter 5.2]{Guendogan} or \cite{Maier} tell us that there always exists a universal topological differential module $(\Omega(A),d_A)$ for a given complete locally convex associative commutative unital algebra $A$.
\end{definition}


Now we recall the cocycle for current algebras of the form $A \otimes \frak{g}$ with unital complete locally convex algebra $A$ that Maier proved in \cite{Maier} to be universal.
\begin{definition}
If $\frak{g}$ is a finite-dimensional semisimple Lie algebra, $A$ a commutative, associative unital complete locally convex algebra, then we define the cocycle 
\begin{align*}
\omega_{\frak{g},A} \colon A\otimes \frak{g} \times A \otimes \frak{g} &\rightarrow V_\frak{g} \otimes (\Omega(A)/\overline{d_A(A)})\\
(a \otimes x , b \otimes y) &\mapsto \kappa_\frak{g}(x,y) \otimes [a \cdot d_A(b)].
\end{align*} 
For convenience we write $V_{\frak{g},A}:=V_\frak{g} \otimes (\Omega(A)/\overline{d_A(A)})$.
\end{definition}

In \cite[Theorem 16]{Maier} we directly get the following Lemma \ref{Maieruniversell}.
\begin{lemma}\label{Maieruniversell} 
Let $\frak{g}$ be a finite-dimensional semisimple Lie algebra, $A$ a commutative, associative unital complete algebra. If $W$ is a complete locally convex space considered as trivial $\frak{g}$-module, then the map $\delta_W\colon Lin(V_{\frak{g},A},W)\rightarrow H^2(A\otimes\frak{g},W)$, $\theta \mapsto [\theta\circ \omega_{\frak{g},A}]$ is bijective.
\end{lemma}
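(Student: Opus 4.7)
The plan is to invoke Maier's \cite[Theorem 16]{Maier} directly, since the hypotheses of our lemma (finite-dimensional semisimple $\frak{g}$, commutative associative unital complete locally convex $A$, complete locally convex $W$ viewed as a trivial module) are exactly the hypotheses under which Maier establishes bijectivity of the comparison map. There is no independent argument to give: the content of the lemma \emph{is} Maier's result, repackaged in the notation of this paper.

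Before quoting the theorem I would check that our formulation and Maier's coincide. The target $V_{\frak{g},A} = V_\frak{g} \otimes (\Omega(A)/\overline{d_A(A)})$ splits into two factors that need to be matched separately. On the Lie-algebra side, for finite-dimensional semisimple $\frak{g}$ the universal invariant symmetric bilinear form $\kappa_\frak{g}\colon \frak{g}^2 \to V_\frak{g}$ is uniquely determined up to isomorphism (the pair $(V_\frak{g},\kappa_\frak{g})$ from Section~\ref{Pre} agrees with the classical construction used by Maier). On the algebra side, the universal continuous differential module $(\Omega(A),d_A)$ modulo the closure of $d_A(A)$ is the standard object, and it coincides with the module appearing in Maier's formulation, as recalled in \cite[Chapter 5.2]{Guendogan}. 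Finally, evaluating both cocycles on generators $a\otimes x,\ b\otimes y$ yields $\kappa_\frak{g}(x,y)\otimes [a\cdot d_A(b)]$, so the two cocycles literally agree.

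The only genuine point to watch—really the sole ``obstacle''—is a notational slip in the statement: it is written with $\Lin$ and $H^2$ rather than $\Lin_{ct}$ and $H^2_{ct}$, whereas the definition of weak universality for complete locally convex spaces was given in the continuous framework, and Maier's theorem is proved continuously (producing a continuous $\theta$ and identifying continuous cohomology). One should either read the statement with the $ct$ subscripts tacitly inserted, or verify that the intended hypothesis on $W$ absorbs the continuity requirement. Once this bookkeeping is settled, the lemma is nothing more than a restatement of \cite[Theorem 16]{Maier} in the notation introduced above, and no further argument is required.
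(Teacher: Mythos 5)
Your proposal matches the paper exactly: the lemma is stated there as a direct consequence of \cite[Theorem 16]{Maier}, with no further argument given, and your notational cross-checks (matching $V_\frak{g}$, $\Omega(A)/\overline{d_A(A)}$, and the cocycle on generators) are exactly the tacit identifications the paper relies on. Your observation about the missing $ct$ subscripts is a fair reading of what is evidently intended.
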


\begin{definition}\label{delta}
For a locally convex algebra $A$ and a finite-dimensional Lie algebra $\frak{g}$ the Lie algebra $A\otimes \frak{g}$ is locally convex. For $y\in \frak{g}$, we get a continuous bilinear map $A\times \frak{g} \rightarrow A\otimes \frak{g}$ $(c,x) \mapsto c\otimes [y,x]$ which induces a continuous linear map $\delta_y \colon A\otimes \frak{g} \rightarrow A\otimes \frak{g}$ with $\delta_y (c\otimes x)= c \otimes [y,x]$. We get a linear map $\delta \colon \frak{g} \rightarrow \End_V (A \otimes \frak{g})$, $y\mapsto \delta_y$. Moreover $\delta$ is a Lie algebra homomorphism, because $\delta_{[y_1,y_2]}(c \otimes x) = c \otimes [[y_1,y_2],x] = c \otimes [y_1,[y_2,x]] - c \otimes [y_2,[y_1,x]] = \delta_{y_1} \delta_{y_2} (c\otimes x) - \delta_{y_2}\delta_{y_1} (c\otimes x)$. Also we have $\delta_y \in der(A\otimes \frak{g})$, because $\delta_y( [ c\otimes x , c'\otimes x'] ) = cc' \otimes [y,[x,x']] = cc'\otimes [[y,x],x'] + cc' \otimes [x,[y,x']] = [\delta_y (c\otimes x), c'\otimes x'] + [c \otimes x , \delta_y (c' \otimes x')]]$. We define $[y,\_]:=\delta_y$ for $y\in \frak{g}$.

With the Lie algebra homomorphism $\delta$ we can define the semidirect product $(A\otimes \frak{g}) \rtimes \frak{g}$ with the Lie-bracket $[(z_1,y_1),(z_2,y_2)] = ([z_1,z_2] + \delta_{y_1}(z_2) - \delta_{y_2}(z_1) , [y_1,y_2]) = ([z_1,z_2] + [y_1,z_2] - [y_2,z_1],[y_1,y_2])$ for $z_i\in A\otimes \frak{g}$ and $y_i \in \frak{g}$, where we wrote $[y,\_]:=\delta_y$ for $y\in \frak{g}$. $A\otimes \frak{g} \rtimes \frak{g}$ becomes a locally convex Lie algebra. 

We identify $A \otimes \frak{g}$ with the ideal $\im (i)$, where $i \colon A \otimes \frak{g} \rightarrow A \otimes \frak{g} \rtimes \frak{g}$, $z \mapsto (z,0)$ is a topological embedding that is a Lie algebra homomorphism. The image is closed as kernel of the projection $A \otimes \frak{g} \rtimes \frak{g} \rightarrow \frak{g}$. 

Moreover we identify $\frak{g}$ with the subalgebra $\im (i_\frak{g})$, where $i_\frak{g} \colon \frak{g} \rightarrow (A\otimes \frak{g}) \rtimes \frak{g}$, $x\mapsto (0,x)$ is a topological embedding with closed image that is a Lie algebra homomorphism.

For $(z,x) \in A \otimes \frak{g} \rtimes \frak{g}$ we can write $(z,x)=(z,0)+(0,x)=z+x$. 
\end{definition}


\cite[Remark 5.1.7 and Lemma 5.1.8]{Guendogan} lead to the following Lemma \ref{IsoUnital}.
\begin{lemma}\label{IsoUnital}
For a locally convex algebra $A$ and a finite-dimensional Lie algebra $\frak{g}$ we have an isomorphism of locally convex Lie algebras $\varphi \colon A_1 \otimes \frak{g} \rightarrow (A \otimes \frak{g}) \rtimes \frak{g}$ with $(\lambda,a) \otimes w \mapsto (a \otimes w,\lambda w)$ for all $\lambda \in \mathbb{K}$, $a \in A$ and $w \in \frak{g}$. 
\end{lemma}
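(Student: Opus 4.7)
The plan is to exploit the fact that, as a locally convex space, the unitalisation decomposes as $A_1 = \mathbb{K} \oplus A$, and tensor products with finite-dimensional spaces distribute over finite direct sums. First I would observe that projective tensor product with the finite-dimensional space $\frak{g}$ yields a topological linear isomorphism
\begin{align*}
A_1 \otimes \frak{g} = (\mathbb{K} \oplus A) \otimes \frak{g} \;\cong\; (\mathbb{K} \otimes \frak{g}) \oplus (A \otimes \frak{g}) \;\cong\; \frak{g} \oplus (A \otimes \frak{g}),
\end{align*}
and the map $\varphi$ in the statement is precisely this identification after swapping summands. So as a candidate for $\varphi$, one takes the continuous linear extension of $(\lambda,a)\otimes w \mapsto (a\otimes w, \lambda w)$ obtained from the universal property of the projective tensor product (the underlying bilinear map is manifestly continuous into $(A\otimes \frak{g})\rtimes \frak{g}$). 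The inverse is the continuous linear map sending $(a\otimes w, v) \in (A\otimes \frak{g}) \rtimes \frak{g}$ to $(0,a)\otimes w + (1,0)\otimes v \in A_1 \otimes \frak{g}$; continuity of the inverse follows from continuity of its two pieces on $A\otimes \frak{g}$ and $\frak{g}$, respectively, together with the direct sum structure of the semidirect product as a locally convex space.

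The substantive point is that $\varphi$ intertwines the two Lie brackets. Since the brackets on both sides are continuous and both spaces are spanned topologically by simple tensors of the form $(\lambda,a)\otimes w$, it suffices to verify the identity on such elements. On the left hand side, multiplication in $A_1$ gives
\begin{align*}
[(\lambda_1,a_1)\otimes w_1, (\lambda_2,a_2)\otimes w_2] = (\lambda_1\lambda_2, \lambda_1 a_2 + \lambda_2 a_1 + a_1 a_2)\otimes [w_1,w_2],
\end{align*}
which $\varphi$ sends to $\bigl((\lambda_1 a_2 + \lambda_2 a_1 + a_1 a_2)\otimes [w_1,w_2],\, \lambda_1\lambda_2[w_1,w_2]\bigr)$. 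On the right hand side, using the semidirect product bracket of Definition \ref{delta} and $\delta_w(a\otimes x) = a\otimes [w,x]$, I would compute
\begin{align*}
[(a_1\otimes w_1, \lambda_1 w_1), (a_2\otimes w_2, \lambda_2 w_2)]
= \bigl(a_1a_2\otimes [w_1,w_2] + \lambda_1 a_2\otimes [w_1,w_2] + \lambda_2 a_1\otimes [w_1,w_2],\, \lambda_1\lambda_2[w_1,w_2]\bigr),
\end{align*}
using also antisymmetry $-\lambda_2\delta_{w_2}(a_1\otimes w_1) = \lambda_2 a_1\otimes [w_1,w_2]$. The two expressions agree, so $\varphi$ is a Lie algebra homomorphism on simple tensors, hence everywhere by continuity and bilinearity.

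I expect the main (though mild) obstacle to be purely bookkeeping: keeping the summand corresponding to $\lambda$ straight through the semidirect product (which contributes both the action $\delta_w$ and the bracket in $\frak{g}$), and making sure that the continuity of $\varphi^{-1}$ is actually argued rather than taken for granted. The latter is essentially the observation that, since $\frak{g}$ is finite-dimensional, the natural map $(A\otimes \frak{g})\oplus \frak{g} \to A_1 \otimes \frak{g}$ given by the two inclusions is continuous and, as noted above, inverse to $\varphi$; no additional topological subtleties arise because no completions are needed.
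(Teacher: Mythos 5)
Your proof is correct: the bracket computation on simple tensors matches the semidirect product bracket of Definition \ref{delta} (including the sign in $-\delta_{y_2}(z_1)$), and the topological isomorphism $A_1\otimes\frak{g}\cong\frak{g}\oplus(A\otimes\frak{g})$ via finite-dimensionality of $\frak{g}$ handles continuity of $\varphi$ and its inverse. The paper gives no proof of its own here --- it simply cites G{\"u}ndo\u{g}an's Remark 5.1.7 and Lemma 5.1.8 --- so your argument supplies exactly the direct verification that the citation delegates, and it is the expected one.
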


\begin{lemma}
If $\frak{g}$ and $\frak{h}$ are locally convex Lie algebras, $V$ a locally convex space and $\varphi \colon \frak{g} \rightarrow \frak{h}$ a continuous Lie algebra homomorphism, than $H_{ct}(\varphi) \colon H^2_{ct}(\frak{h},V) \rightarrow H^2_{ct}(\frak{g},V)$, $[\omega] \mapsto [\omega \circ (\varphi, \varphi)]$ is a well-defined and linear map. 
\end{lemma}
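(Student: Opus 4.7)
The proof is a routine verification that pullback along a continuous Lie algebra homomorphism passes to continuous cohomology. I would proceed in three short steps.

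First, I would check that pullback sends continuous cocycles to continuous cocycles. Given $\omega \in Z^2_{ct}(\frak{h}, V)$, the composition $\omega \circ (\varphi,\varphi) \colon \frak{g} \times \frak{g} \to V$ is continuous and bilinear because $\varphi$ and $\omega$ are, and it is alternating because $\omega$ is. The cocycle identity
\begin{align*}
\sum_{\text{cyc}} \omega(\varphi[x_1,x_2], \varphi(x_3)) = 0
\end{align*}
follows by applying the cocycle identity for $\omega$ at the triple $(\varphi(x_1), \varphi(x_2), \varphi(x_3))$ and using $\varphi([x_i,x_j]) = [\varphi(x_i),\varphi(x_j)]$. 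Hence $\omega \circ (\varphi,\varphi) \in Z^2_{ct}(\frak{g}, V)$.

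Second, I would check that pullback sends continuous coboundaries to continuous coboundaries, which gives well-definedness on cohomology classes. If $\omega = d\lambda$ with $\lambda \in \Lin_{ct}(\frak{h}, V)$, meaning $\omega(y_1,y_2) = \lambda([y_1,y_2])$, then
\begin{align*}
(\omega \circ (\varphi,\varphi))(x_1,x_2) = \lambda([\varphi(x_1),\varphi(x_2)]) = (\lambda \circ \varphi)([x_1,x_2]),
\end{align*}
so $\omega \circ (\varphi,\varphi) = d(\lambda \circ \varphi)$ is a continuous coboundary on $\frak{g}$. Consequently, the assignment $[\omega] \mapsto [\omega \circ (\varphi,\varphi)]$ is independent of the choice of representative.

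Third, linearity of $H_{ct}(\varphi)$ is immediate from the linearity of $\omega \mapsto \omega \circ (\varphi, \varphi)$ at the level of cocycles, which descends to the quotient. There is no real obstacle here; the whole argument is a formal check, so the only thing to be careful about is making sure each constructed auxiliary map (in particular $\lambda \circ \varphi$) is indeed continuous, which is clear since both factors are.
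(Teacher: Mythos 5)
Your proposal is correct and takes essentially the same route as the paper: the paper's proof consists precisely of your second step, namely the observation that for $\omega = \eta \circ [\_,\_]$ one has $\omega \circ (\varphi,\varphi) = (\eta \circ \varphi) \circ [\_,\_]$, so pullback sends coboundaries to coboundaries. Your additional verifications (cocycles to cocycles, linearity) are the routine checks the paper leaves implicit.
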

\begin{proof}
For $\eta \in \Lin(\frak{h},V)$ and $\omega \in Z^2_{ct}(\frak{h},V)$ with $\omega = \eta \circ [\_,\_]$ we have $(\varphi,\varphi)^\ast(\omega) = \eta \circ \varphi \circ [\_,\_]$.
\end{proof}

We will use the concept of neutral triple evolved in \cite[Definition 5.1.3]{Guendogan} and recall it in the next Definition \ref{spez.Trip.}.
\begin{definition}\label{spez.Trip.}
Let be $A$ a pseudo-unital associative commutative locally convex algebra $A$ and $\frak{g}$ a finite-dimensional perfect Lie algebra.
\begin{compactenum}[(a)]
\item We get an $A$-module structure $\cdot\colon A \times (A \otimes \frak{g}) \rightarrow A\otimes \frak{g}$ with $a \cdot (b \otimes y) (a\cdot b) \otimes y$ for $a,b \in A$ and $y \in \frak{g}$. Actually $A\otimes \frak{g}$ is a $A$-module in the category of locally convex spaces, because $\frak{g}$ is finite-dimensional. In this situation we call $\nu \in A$ neutral for $f \in A\otimes \frak{g}$, if $\nu \cdot f =f$.
\item For $f \in A \otimes \frak{g}$ (resp. $\varphi \in A$) we call $(\lambda,\nu,\mu) \in A^3$ a {\it neutral triple} for $f$ (resp. $\varphi$), if $\mu \cdot f =f$ (resp. $\mu \cdot \varphi = \varphi$), $\nu \cdot \mu = \mu$ and $\lambda \cdot \nu = \nu$.
\item Let $(v_i)_{i=1,..,n}$ be a basis of $\frak{g}$. For $f= \sum_{i=1}^n \varphi_i \otimes v_i \in A \otimes \frak{g}$ with $\varphi \in A$ we choose $\mu_f \in A$ with $\mu$ neutral for all $\varphi_i$. Moreover we choose $\nu_f$ neutral for $\mu_f$ and $\lambda_f$ neutral for $\nu_f$. It is clear that $(\lambda_f,\nu_f,\mu_f)$ is a neutral triple for $f$ and for all $\varphi_i$. For the rest of this section we will use this notation to talk about this element. For every $\varphi\in A$ we choose a neutral triple $(\lambda_\varphi,\nu_\varphi, \mu_\varphi)$. Obviously $(\lambda_\varphi,\nu_\varphi, \mu_\varphi)$ is a neutral triple for  $\varphi \otimes v$ for every $v \in\frak{g}$. 
\end{compactenum}
\end{definition}


%
%

From the proof of \cite[Theorem 5.1.10]{Guendogan} we can extract the following Lemma \ref{wohl}.
\begin{lemma}\label{wohl}
Let $A$ be a pseudo-unital associative commutative locally convex algebra, $\frak{g}$ a finite-dimensional perfect Lie algebra and $V$ a locally convex space, $\omega \in Z^2_{ct}(A \otimes \frak{g} , V)$, $f \in A \otimes \frak{g}$ and $y \in \frak{g}$. If $(\lambda_1,\nu_1, \mu_1)$ and $(\lambda_2,\nu_2, \mu_2)$ are neutral triples for $f$, then $\omega(f,\lambda_1 \otimes y) = \omega(f, \lambda_2 \otimes y)$.
\end{lemma}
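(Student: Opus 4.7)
The plan is to reduce the claim to the following auxiliary statement: \emph{if $\eta \in A$ admits $\nu, \mu \in A$ with $\eta\nu = 0$, $\nu\mu = \mu$, and $\mu\cdot f = f$, then $\omega(f, \eta \otimes y) = 0$.} Granted this, the lemma follows by using pseudo-unitality of $A$ to pick a single $\lambda \in A$ which serves as a common local unit for $\nu_1$ and $\nu_2$, i.e.\ $\lambda\nu_i = \nu_i$ for $i = 1,2$. Then $(\lambda - \lambda_i)\nu_i = \nu_i - \nu_i = 0$, and applying the auxiliary statement to $\eta := \lambda - \lambda_i$ together with the pair $(\nu_i, \mu_i)$ yields $\omega(f, \lambda \otimes y) = \omega(f, \lambda_i \otimes y)$ for both $i$, whence $\omega(f, \lambda_1 \otimes y) = \omega(f, \lambda_2 \otimes y)$.

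To establish the auxiliary statement, fix a basis $(v_j)_{j=1}^n$ of $\frak{g}$ and decompose $f = \sum_j \varphi_j \otimes v_j$. The hypothesis $\mu \cdot f = f$ forces $\mu\varphi_j = \varphi_j$ for every $j$, and combining $\eta\nu = 0$ with $\nu\mu = \mu$ yields $\eta\mu = \eta\nu\mu = 0$, hence $\eta\varphi_j = \eta\mu\varphi_j = 0$ for each $j$. Since $\frak{g}$ is perfect, write $y = \sum_k [p_k, q_k]$ with $p_k, q_k \in \frak{g}$. Pick (again using pseudo-unitality) a common local unit $\sigma \in A$ for the finite set $\{\eta, \varphi_1, \ldots, \varphi_n\}$, so that $\sigma\eta = \eta$ and $\sigma\varphi_j = \varphi_j$ for every $j$; then $\eta \otimes [p_k, q_k] = [\eta \otimes p_k, \sigma \otimes q_k]$ inside $A \otimes \frak{g}$.

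The heart of the argument is a two-fold use of the cocycle identity
\begin{align*}
\omega([a,b], c) + \omega([b,c], a) + \omega([c,a], b) = 0.
\end{align*}
Apply it first to $a = \eta \otimes p_k$, $b = \sigma \otimes q_k$, $c = f$. Since $\eta\varphi_j = 0$, the bracket $[f, \eta \otimes p_k]$ vanishes, while $[\sigma \otimes q_k, f] = \sum_j \varphi_j \otimes [q_k, v_j]$ thanks to $\sigma\varphi_j = \varphi_j$. Using the antisymmetry of $\omega$ the identity collapses to
\begin{align*}
\omega(f, \eta \otimes [p_k, q_k]) = \omega\left(\sum_j \varphi_j \otimes [q_k, v_j],\ \eta \otimes p_k\right).
\end{align*}
Apply it next to $a = \eta \otimes p_k$, $b = \nu \otimes q_k$, $c = f$: now $[\eta \otimes p_k, \nu \otimes q_k] = (\eta\nu)\otimes[p_k, q_k] = 0$, the term $[f, \eta \otimes p_k]$ is still $0$, and $[\nu \otimes q_k, f] = \sum_j \varphi_j \otimes [q_k, v_j]$ because $\nu\varphi_j = \nu\mu\varphi_j = \varphi_j$. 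The identity then forces the right-hand side of the previous display to vanish, so $\omega(f, \eta \otimes [p_k, q_k]) = 0$ for each $k$, and summing over $k$ delivers $\omega(f, \eta \otimes y) = 0$.

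The only genuine obstacle is the reduction step: one needs pseudo-unitality of $A$ in the sharp form that every \emph{finite} subset admits a common local unit, not merely single elements. In the class of algebras treated in this paper (inductive limits of complete locally convex algebras $A_n$ together with elements $1_n \in A$ acting as units on $A_n$) this is automatic, since any finite family lies in some $A_n$ and $1_n$ serves as the required common unit; it is important to flag that this is the precise point where the hypothesis on $A$ is used.
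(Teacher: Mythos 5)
Your argument is correct, and it is worth noting that the paper itself offers no proof of this lemma at all: it merely states that the claim ``can be extracted from the proof of \cite[Theorem 5.1.10]{Guendogan}''. Your proposal therefore supplies a self-contained argument where the paper has only a citation. The ingredients are the same ones G{\"u}ndo\u{g}an's computation relies on (the cyclic cocycle identity $\omega([a,b],c)+\omega([b,c],a)+\omega([c,a],b)=0$, perfectness of $\frak{g}$ to write $y=\sum_k[p_k,q_k]$, and the absorption relations of a neutral triple), but your organisation is cleaner: you isolate the vanishing statement $\omega(f,\eta\otimes y)=0$ for $\eta$ with $\eta\nu=0$, $\nu\mu=\mu$, $\mu\cdot f=f$, and then apply it to $\eta=\lambda-\lambda_i$ for a common local unit $\lambda$ of $\nu_1,\nu_2$. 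I checked the two applications of the cocycle identity: with $a=\eta\otimes p_k$, $c=f$ the bracket $[f,\eta\otimes p_k]$ indeed vanishes because $\eta\varphi_j=\eta\nu\mu\varphi_j=0$, the choice $b=\sigma\otimes q_k$ produces $\eta\otimes[p_k,q_k]$ in the first slot, and the choice $b=\nu\otimes q_k$ kills that slot while reproducing the same second term $\omega\bigl(\sum_j\varphi_j\otimes[q_k,v_j],\eta\otimes p_k\bigr)$, which forces it to be zero. All steps are purely algebraic, so continuity of $\omega$ is not needed, consistent with the statement.

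The one point you rightly flag is the only place where care is required: both the reduction step (a common local unit for $\{\nu_1,\nu_2\}$) and the auxiliary step (a common local unit $\sigma$ for $\{\eta,\varphi_1,\dots,\varphi_n\}$) need pseudo-unitality in the form ``every finite subset admits a common local unit'', not merely one local unit per element. This is not an extra hypothesis relative to the paper: Definition \ref{spez.Trip.}(c) already chooses a single $\mu_f$ neutral for all coefficients $\varphi_i$ of $f$ simultaneously, so the paper tacitly works with the finite-subset form throughout, and in the concrete class of algebras of Theorem \ref{calg} (inductive limits of $A_n$ with units $1_n$ acting on $A_n$) it is automatic. So your proof is a valid, and arguably more transparent, replacement for the omitted argument.
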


We now prove that \cite[Theorem 5.1.14]{Guendogan} also holds for a class of locally convex algebras that contains the compactly supported smooth functions on a $\sigma$-compact finite-dimensional manifold.
\begin{theorem}\label{calg}
Let $A$ be a complete pseudo-unital algebra that is the strict inductive limit of locally convex subalgebras $A_m \subseteq A$ for which we can find an element $1_m \in A$ with $1_m \cdot a=a$ for all $a \in A_m$. Moreover, let $\frak{g}$ be a semisimple finite-dimensional Lie algebra, $V$ a locally convex space and $i \colon A\otimes\frak{g} \rightarrow A\otimes\frak{g} \rtimes \frak{g}$ the natural inclusion. Then $H^2_{ct}(i) \colon H^2_{ct}(A\otimes\frak{g}\rtimes \frak{g},V) \rightarrow H^2_{ct}(A\otimes\frak{g},V)$ is bijective.
\end{theorem}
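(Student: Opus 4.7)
The plan is to follow the injectivity--surjectivity skeleton of the proof of \cite[Theorem 5.1.14]{Guendogan}, adding one crucial ingredient: on each building block $A_m$, the local unit $1_m \in A$ furnishes a uniform neutral triple $(1_m, 1_m, 1_m)$ in the sense of Definition \ref{spez.Trip.} for every element of $A_m \otimes \frak{g}$. This uniformity is what makes it possible to verify continuity on the strict inductive limit $A \otimes \frak{g} = \varinjlim_m (A_m \otimes \frak{g})$.

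For injectivity, assume $\tilde\omega \in Z^2_{ct}((A \otimes \frak{g}) \rtimes \frak{g}, V)$ with $i^\ast \tilde\omega = d\eta$ for some continuous linear $\eta \colon A\otimes \frak{g} \to V$. Extending $\eta$ by zero on the topologically complementary subspace $\frak{g}$ yields a continuous linear $\bar\eta$ on $(A \otimes \frak{g}) \rtimes \frak{g}$; replacing $\tilde\omega$ by $\tilde\omega - d\bar\eta$, one may assume $\tilde\omega|_{(A \otimes \frak{g})^2} = 0$. Whitehead's second lemma applied to the semisimple $\frak{g}$ (valid for any trivial locally convex module $V$, since $\frak{g}$ is finite-dimensional) provides $\psi \colon \frak{g} \to V$ with $\tilde\omega|_{\frak{g}^2} = d\psi$; extending $\psi$ by zero on $A \otimes \frak{g}$ and subtracting the corresponding coboundary (which leaves the already-vanishing $(A \otimes \frak{g})^2$-part untouched) reduces to the case $\tilde\omega|_{\frak{g}^2} = 0$ as well. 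Applying the cocycle identity to triples $(f_1, f_2, y)$ with $f_i \in A \otimes \frak{g}$ and $y \in \frak{g}$ now forces $\tilde\omega([f_1, f_2], y) = 0$, and the perfectness of $A \otimes \frak{g}$ (Definition \ref{Aog}) then kills the entire mixed part of $\tilde\omega$, so $[\tilde\omega] = 0$.

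For surjectivity, given $\omega \in Z^2_{ct}(A \otimes \frak{g}, V)$, construct the extension $\tilde\omega$ on $((A \otimes \frak{g}) \rtimes \frak{g})^2$ exactly as in \cite[Theorem 5.1.14]{Guendogan}: take $\omega$ on the $(A \otimes \frak{g})^2$ part, zero on the $\frak{g}^2$ part, and on the mixed part use the neutral-triple prescription modelled on $(f,y) \mapsto \omega(f, \lambda_f \otimes y)$ (symmetrised to be skew). Lemma \ref{wohl} yields well-definedness, and the bilinearity, skew-symmetry, cocycle identity and $i^\ast \tilde\omega = \omega$ are verified by the same algebraic manipulations as in \cite[Theorem 5.1.14]{Guendogan}, which depend only on the bracket of Definition \ref{delta} and the invariance of $\omega$.

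The genuinely new step, and the one I expect to be the main obstacle, is the continuity of $\tilde\omega$: the pointwise assignment $f \mapsto \lambda_f$ is not continuous in general. Since $\frak{g}$ is finite-dimensional, $(A \otimes \frak{g}) \rtimes \frak{g} = \varinjlim_m ((A_m \otimes \frak{g}) \rtimes \frak{g})$ is a strict inductive limit, so it suffices to check continuity of the restriction of $\tilde\omega$ to each stage. On such a stage, Lemma \ref{wohl} permits the uniform choice $\lambda_{f_i} := 1_m$ for all $f_i \in A_m \otimes \frak{g}$, reducing the formula to
\begin{align*}
\tilde\omega(f_1 + y_1, f_2 + y_2) = \omega(f_1, f_2) + \omega(f_1, 1_m \otimes y_2) - \omega(f_2, 1_m \otimes y_1),
\end{align*}
which is continuous because $\omega$ is continuous and $y \mapsto 1_m \otimes y$ is a continuous linear map $\frak{g} \to A \otimes \frak{g}$. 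Thus the local units $1_m$ replace the discontinuous pointwise choice of $\lambda_f$ by a continuous uniform choice on each stage, completing the proof.
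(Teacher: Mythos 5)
Your proposal follows the same route as the paper's proof: surjectivity via G{\"u}ndo\u{g}an's neutral-triple extension with Lemma \ref{wohl} for well-definedness, continuity via the local units on the stages of the inductive limit, and injectivity by subtracting coboundaries, using the cocycle identity on mixed triples together with perfectness of $A\otimes\frak{g}$, and invoking Whitehead's lemma on the $\frak{g}\times\frak{g}$ part (the paper kills the mixed part before the $\frak{g}\times\frak{g}$ part, you do it in the opposite order; both work).

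The one point to correct is the claim that $(1_m,1_m,1_m)$ is a neutral triple for the elements of $A_m\otimes\frak{g}$. The hypothesis only gives $1_m\in A$, not $1_m\in A_m$, and a neutral triple $(\lambda,\nu,\mu)$ requires $\nu\cdot\mu=\mu$; for $(1_m,1_m,1_m)$ this reads $1_m\cdot 1_m=1_m$, which already fails in the motivating example $A=C^\infty_c(M)$, $A_m=C^\infty_{K_m}(M)$, where $1_m$ is a bump function with $1_m^2\neq 1_m$. Consequently Lemma \ref{wohl} does not directly licence replacing $\lambda_f$ by $1_m$ on the $m$-th stage. The repair is exactly the paper's: by pseudo-unitality of $A$ choose $\tilde{1}_m$ with $\tilde{1}_m\cdot 1_m=1_m$ and $\tilde{\tilde{1}}_m$ with $\tilde{\tilde{1}}_m\cdot\tilde{1}_m=\tilde{1}_m$; then $(\tilde{\tilde{1}}_m,\tilde{1}_m,1_m)$ is a genuine neutral triple for every element of $A_m$, Lemma \ref{wohl} gives $\omega_0(f,\lambda_f\otimes y)=\omega_0(f,\tilde{\tilde{1}}_m\otimes y)$ for $f\in A_m\otimes\frak{g}$, and your continuity argument goes through verbatim with $\tilde{\tilde{1}}_m$ in place of $1_m$. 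A further small point: to pass from continuity on each stage to continuity on the limit you should, as the paper does, use the finite-dimensionality of $\frak{g}$ to reduce the mixed part to \emph{linear} maps $A\to V$, since for a bilinear map on an inductive limit stage-wise continuity is not by itself sufficient.
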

\begin{proof}
Surjectivity: Let $(v_i)_{i=1,..,n}$ be a basis of $\frak{g}$. We use the notation of Definition \ref{spez.Trip.}. Given $\omega_0 \in Z_{ct}^2(A\otimes\frak{g},V)$, we define  $\omega \colon A\otimes\frak{g} \rtimes \frak{g} \rightarrow V$ $(f_1,y_1), (f_2,y_2) \mapsto \omega_0(f_1,f_2) + \omega_0(f_1 , \lambda_{f_1} \otimes y_1) - \omega_0(f_2, \lambda_{f_2} \otimes y_1)$. 

For $f,g \in A\otimes\frak{g}$, $r \in \mathbb{K}$ and $y \in \frak{g}$ we can choose a neutral triple $(\lambda, \nu , \mu)$ that is neutral for $f$ and for $g$. Especially this triple is also neutral for $rf+g$. Because we now get $\omega_0 (rf+g, \lambda_{rf+g} \otimes y) = \omega_0(rf+g,\lambda \otimes y) = r\omega_0(f,\lambda \otimes y) + \omega_0(g, \lambda \otimes y) = r\omega_0(f,\lambda_f \otimes y) + \omega_0(g, \lambda_g \otimes y)$, we can easily proof that $\omega$ is bilinear. Obviously $\omega$ is anti-symmetric.   

The argument that $\omega$ is a cocycle works exactly like in the proof \cite[5.10.]{Guendogan}. For the convenience of the reeder we will recall this argument in the appendix.

We now want to show that $\omega$ is also continuous. We just have to show that the bilinear map $\psi \colon A\otimes\frak{g} \times \frak{g} \rightarrow V$, $(f,y) \mapsto \omega_0(f, \lambda_f \otimes y)$ is continuous. Because we can identify $(A\otimes\frak{g}) \otimes \frak{g}$ with $(A \otimes \frak{g})^n$, it is sufficient to prove the continuity of $(A\otimes\frak{g})^n \rightarrow V$, $(f_i)_{i=1,..,n} \mapsto \sum_{i=1}^n \omega_0(f_i, \lambda_{f_i}\otimes v_i)$. To show the continuity of $A\otimes \frak{g} \rightarrow V$, $f \mapsto \omega_0(f, \lambda_f \otimes v)$, we again identify $A\otimes \frak{g}$ with $A^n$ and prove the continuity of $A^n \rightarrow V$, $(\varphi_i)_{i=1,..,n} \mapsto \sum_{i=1}^n \omega_0(\varphi_i \otimes v_i, \lambda_{f} \otimes y$ with $f = \sum_{i=1}^n \varphi_i \otimes v_i$ and an arbitrary $y \in \frak{g}$. But because of the construction of the neutral triple $(\lambda_f, \nu_f , \mu_f)$ we get $\omega_0(\varphi_i \otimes v_i, \lambda_{f} \otimes y) = \omega_0(\varphi_i \otimes v_i, \lambda_{\varphi_i} \otimes y)$ for $i \in \{1,...,n\}$. It remains to show that the linear map $A \rightarrow V$, $\varphi \mapsto \omega_0(\varphi \otimes x, \lambda_\varphi \otimes y)$ is continuous for $x,y \in \frak{g}$. But for $m \in \mathbb{N}$  we find an element $1_m\in A$ with $1_m \cdot a =a$ for all $a \in A_m$. We choose an element $\tilde{1}_m\in A$ that is unital for $1_m$ and an element $\tilde{\tilde{1}}_m$ that is unital for $\tilde{1}_m$ and see that $(\tilde{\tilde{1}}_m, \tilde{1}_m, 1_m)$ is a unital triple for every $\varphi \in A_m$. We see that $A_m \rightarrow V$, $\varphi \mapsto \omega_0(\varphi \otimes x, \lambda_\varphi \otimes y) = \omega_0(\varphi \otimes x, \tilde{\tilde{1}}_m \otimes y)$ is continuous and conclude that also the map $A \rightarrow V$, $\varphi \mapsto \omega_0(\varphi \otimes x \lambda_\varphi \otimes y)$ is continuous, because $A$ is the inductive limit of the subalgebras $A_m \subseteq A$.

The equation $H_{ct}^2(i)([\omega]) = [\omega_0]$ is easily checked, because for $f,g \in A\otimes\frak{g}$ we have $\omega \circ (i,i) (f,g) = \omega(f,g) = \omega_0(f,g)$.

The argument that $H_{ct}^2(i)$ is injective works exactly like in the proof \cite[5.10.]{Guendogan}. For the convinience of the reeder we will recall this argument in the appendix.
\end{proof}

\begin{remark}\label{VerG}
It is clear that Theorem \cite[Theorem 5.1.14]{Guendogan} is a direct consequence of Theorem \ref{calg}. Moreover we see that the class of locally convex algebras $A$ considered in \ref{calg} contains the compactly supported functions on a finite-dimensional $\sigma$-compact manifold $M$.
\end{remark}

\begin{remark}\label{VerB}
Although the class of algebras in \ref{calg} contains the compactly supported smooth functions on a $\sigma$-compact manifold $M$ considered in \cite[Theorem 2.7.]{Bas}, it contains other interesting algebras, e.g. the compactly supported continuous functions on a $\sigma$-compact manifold. So Theorem \ref{calg} can also be understood as a generalisation of \cite[Theorem 2.7.]{Bas}. Also in \cite[Theorem 2.7.]{Bas} Janssens and Wockel do not discuss if the constructed cocycle $\omega$ that is mapped to $\omega_0$ by $H_{ct}^2(i)$ is actually continuous. For this point an argument is given in the above proof.
\end{remark}

The application of Theorem \ref{calg} is not very difficult.
\begin{corollary}\label{amEnde}
Let $A$ be a locally convex commutative and associative algebra, such that it is the inductive limit  of complete subalgebras $A_n \subseteq A$ with $n \in \mathbb{N}$, such that we find for every $n \in \mathbb{N}$ an element $1_n \in A$ with $1_n \cdot a = a$ for all $a \in A_n$. Moreover let $\frak{g}$ be a finite-dimensional perfect Lie algebra, then $\omega_{\frak{g},A} \colon A\otimes\frak{g} \times A\otimes\frak{g} \rightarrow V_{\frak{g},A_1}$ with $(a \otimes x, b\otimes y) \mapsto \kappa_\frak{g}(x,y) \otimes [a \cdot d_{A_1}(b)]$  is a universal cocycle for $A\otimes \frak{g}$.
\end{corollary}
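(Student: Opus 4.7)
The plan is to transport Maier's universal cocycle for the unitalised current algebra $A_1 \otimes \frak{g}$ down to $A \otimes \frak{g}$, chaining together Lemma \ref{IsoUnital}, Theorem \ref{calg}, and Lemma \ref{schwach}.

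First I would note that $A_1$ is a unital complete commutative associative locally convex algebra, so by Lemma \ref{Maieruniversell} the cocycle $\omega_{\frak{g},A_1}$ is weakly universal on $A_1 \otimes \frak{g}$. Using the topological Lie algebra isomorphism $\varphi \colon A_1 \otimes \frak{g} \to (A \otimes \frak{g}) \rtimes \frak{g}$, $(\lambda, a) \otimes w \mapsto (a \otimes w, \lambda w)$ of Lemma \ref{IsoUnital}, I would transport $\omega_{\frak{g},A_1}$ to a weakly universal cocycle $\tilde\omega := \omega_{\frak{g},A_1} \circ (\varphi^{-1},\varphi^{-1})$ on $(A \otimes \frak{g}) \rtimes \frak{g}$ taking values in $V_{\frak{g},A_1}$.

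Next, Theorem \ref{calg} applied to the natural inclusion $i \colon A \otimes \frak{g} \hookrightarrow (A \otimes \frak{g}) \rtimes \frak{g}$ says that $H^2_{ct}(i)$ is bijective. Combined with weak universality of $\tilde\omega$, this yields for every complete locally convex trivial module $W$ a bijection $\Lin_{ct}(V_{\frak{g},A_1}, W) \to H^2_{ct}(A \otimes \frak{g}, W)$ sending $\theta$ to $[\theta \circ \tilde\omega \circ (i,i)]$. The crucial identification is then that $\tilde\omega \circ (i,i)$ coincides with $\omega_{\frak{g},A}$: under $\varphi^{-1}$ the element $i(a \otimes x) = (a \otimes x, 0)$ pulls back to $(0,a) \otimes x \in A_1 \otimes \frak{g}$, and the definition of $\omega_{\frak{g},A_1}$ gives $\kappa_\frak{g}(x,y) \otimes [(0,a) \cdot d_{A_1}((0,b))] = \kappa_\frak{g}(x,y) \otimes [a \cdot d_{A_1}(b)]$, which is exactly $\omega_{\frak{g},A}(a \otimes x, b \otimes y)$. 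Hence $\omega_{\frak{g},A}$ is weakly universal.

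Finally, since $\frak{g}$ is perfect, $A \otimes \frak{g}$ is perfect as recorded in Definition \ref{Aog}, so Lemma \ref{schwach} upgrades weak universality to universality for all complete locally convex target spaces in the classical sense. I expect the main obstacle to be bookkeeping in the transport step --- tracking target modules correctly and verifying that pullback along $\varphi^{-1} \circ i$ really reproduces $\omega_{\frak{g},A}$ on the nose rather than merely up to a coboundary --- but this reduces to the short direct computation sketched above.
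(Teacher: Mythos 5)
Your proposal is correct and follows essentially the same route as the paper, whose own proof is just the one-line citation of Lemma \ref{schwach} and Theorem \ref{calg}; you have simply made explicit the intermediate transport through Lemma \ref{Maieruniversell} and Lemma \ref{IsoUnital} and the verification that $\tilde\omega\circ(i,i)=\omega_{\frak{g},A}$, all of which the paper leaves implicit. The only caveat (inherited from the paper's own statement, not introduced by you) is that Lemma \ref{Maieruniversell} and Theorem \ref{calg} are stated for semisimple $\frak{g}$, while the corollary assumes only that $\frak{g}$ is perfect.
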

\begin{proof}
The assertion follows directly from Lemma \ref{schwach} and Theorem \ref{calg}.
\end{proof}

The transition of the fact stated in Corollary \ref{amEnde} for current algebras of the form $A\otimes \frak{g}$ to the special case of Lie algebras of the form $C^\infty_c(M,\frak{g})$ can be done like in \cite[Chapter 5.2.]{Guendogan} or \cite[Theorem II.7.]{Bas}. We recall this appraoch in Remark \ref{Anwendung}.

\begin{remark}\label{Anwendung}
\cite[Theorem 11]{Maier} tells us that if $M$ is a $\sigma$-compact manifold,  $\Omega^1(C^\infty_c(M)_1)$ the universal $C^\infty_c(M)_1$-module in the category of complete locally convex spaces, then $d \circ \pr_1 \colon C^\infty_c(M)_1 \rightarrow \Omega^1_c(M)$ induces an isomorphism of topological $C^\infty_c(M)_1$-modules $\Omega^1(C^\infty_c(M)_1) \rightarrow \Omega_c^1(M)$.

Let be $\frak{g}$ a semi simple Lie algebra. For $f \in C^\infty_c(M,\frak{g})$ and $\eta \in \Omega^1_c(M,\frak{g})$ we define the $1$-form $\kappa_\frak{g}(f,dg) \in \Omega^1_c(M,V_\frak{g})$, by $\kappa_\frak{g}(f,\eta)_p(v):= \kappa_\frak{g}(f(p), \eta_p(v))$. Because $d(C^\infty_c(M,\mathbb{R}^n))$ is closed in $\Omega^1_c(M,\mathbb{R}^n)$ the map
\begin{align*}
C^\infty_c(M, \frak{g}) \times C^\infty_c(M, \frak{g}) &\rightarrow \Omega_c(M,V_\frak{g})/d(C^\infty_c(M,V_\frak{g}))\\
(f,g) & \mapsto [\kappa_\frak{g}(f,dg)].
\end{align*}
is a universal cocycle for all complete locally convex spaces.
\end{remark}

%
%
%

\section*{Appendix}

{\bf Details for the proof of \ref{calg}:} The following argument follows the proof of \cite[Theorem 5.1.10]{Guendogan}. We use the notation from the proof of Theorem \ref{calg}.

In order to show that $\omega \in Z^2(A\otimes\frak{g}\rtimes \frak{g},V)$ we choose $f,g,h \in A\otimes\frak{g}$ and $x,y,z \in \frak{g}$. First we mention the trivialities $d\omega(f,g,h) = d\omega_0(f,g,h)=0$ and $d\omega(x,y,z)=0$. We can choose a trivial $(\lambda , \nu, \mu)$ that is neutral for $f$ and $g$, and we can write $f= \sum_{i =1}^n f_i \otimes v_i$ as well as $g=\sum_{j=1}^n g_j \otimes v_j$. We calculate
\begin{align*}
\lambda \cdot [f,g] = \sum\limits_{i,j} \lambda f_i g_j \otimes [v_i,v_j] = [\lambda f,g] = [f,g]
\end{align*}
and see that $(\lambda, \nu, \mu)$ is a neutral triple for $[f,g]$. Now we calculate
\begin{align*}
  & d\omega(f,g,y) = \omega([f,g],y) + \omega([g,y],f)  + \omega([y,f],g)  \\
= & \omega_0([f,g],\lambda \otimes y) + \omega_0([g,y], f) + \omega_0([y,f],g)\\
= & \omega_0([f,g],\lambda \otimes y) + \omega_0([g, \lambda \otimes y], f) + \omega_0([\lambda \otimes y, f] , g)
= & d\omega_0 (f,g,\lambda \otimes y) =0.
\end{align*}
To check that $\omega$ is a cocycle we calculate 
\begin{align*}
&d\omega((f,x), (g,y) (h,z)) = d\omega(f,g,h) + d\omega(f,g,z) + d\omega(f,y,h) + d\omega(f,y,z)\\
+& d\omega(x,g,h) + d\omega(x,g,z) + d\omega(x,y,h) + d\omega(x,y,z) = 0.
\end{align*}

It remains to show the injectivity of the map $H_{ct}^2(i)$. Let $\omega \in Z_{ct}^2(A\otimes\frak{g} \rtimes \frak{g},V)$ with $\omega\circ (i,i)=\eta \circ [\_,\_]$ for $\eta \in \Lin_{ct}(A\otimes\frak{g},V)$. We define the continuous linear map $\eta' \colon A\otimes\frak{g} \rtimes \frak{g} \rightarrow V$, $(f,v)\mapsto \eta(f)$. We define the cocycle $\omega':=\omega - \eta' \circ [\_,\_]$ on $A\otimes\frak{g}\rtimes \frak{g}$. If we can show $[\omega']=0$ in $H_{ct}^2(A\otimes\frak{g},V)$ we are done. First of all we have $\omega'(f,g) =\omega(f,g) - \eta' \circ [f,g]= \omega \circ (i,i) (f,g) - \eta ([f,g])=0$ for all $f,g \in A\otimes\frak{g}$. For $f,g \in A\otimes\frak{g}$ and $y \in \frak{g}$ we calculate 
\begin{align*}
0 = -d\omega'(f,g,y) = \omega'([f,g],y) + \underbracket{\omega'([g,y],f)}_{=0} + \underbracket{\omega'([y,f],g)}_{=0} = \omega'([f,g],y).
\end{align*}
Because $A\otimes\frak{g}$ is perfect, we get that $\omega'$ equals $0$ on $A\otimes\frak{g} \times \frak{g}$ in terms of the natural identifications. For $f_1,f_2 \in A\otimes\frak{g}$ and $y_1,y_2 \in \frak{g}$ we have
\begin{align*}
\omega'((f_1,y_1) , (f_2,y_2)) = \underbracket{\omega'(f_1,f_2)}_{=0} + \underbracket{\omega'(y_1,f_2)}_{=0} + \underbracket{\omega'(f_1,y_2)}_{=0} + \omega'(y_1,y_2) .
\end{align*}
Because $\frak{g}$ is a subalgebra of $A\otimes\frak{g} \rtimes \frak{g}$ we get $\omega|_{\frak{g}\times \frak{g}} \in Z^2_{ct}(\frak{g},V)$ and because $\frak{g}$ is semisimple, we get with the Whitehead theorem for locally convex spaces, stated in \cite[Corollary A.2.9]{Guendogan}, that $H_{ct}^2(\frak{g},V) = \{0\}$. Therefore, we find $\eta'' \in \Lin_{ct}(\frak{g},V)$ with $\omega|_{\frak{g} \times \frak{g}} = \eta'' \circ [\_,\_]$. Finally we see $\omega' = \eta''' \circ [\_,\_]$ with $\eta''' \colon A\otimes\frak{g} \rtimes \frak{g} \rightarrow V$, $(f,v) \mapsto \eta''(v)$.


\begin{thebibliography}{Levy2001}
\addcontentsline{toc}{section}{References}

\bibitem{Bourbaki}
Bourbaki, N., 
``Topological Vector Spaces,'' 
Springer-Verlag, 1987.

\bibitem{Darling}
Darling, R.W.R.,
``Differential Forms and Connections,''
Cambridge University Press, 1994.


\bibitem{GloecknerIII}
Gl{\"o}ckner, H.,
{\it Lie group structures on quotient groups and universal complexifications for infinite-dimensional Lie groups,}
J. Funct. Anal. 194 (2002), 347-409.


\bibitem{Gloeckner}
Gl{\"o}ckner, H.,
\emph{Lie groups over non-discrete topological fields,}
arXiv:math/0408008v1 August 1, 2004.

\bibitem{GloecknerII}
Gl\"ockner, H.,
\emph{Differentiable mappings between spaces of sections,}
arXiv:1308.1172v1, 2013.

\bibitem{Gloeckner u. Neeb}
Gl\"{o}ckner, H.; Neeb, K.-H.,
``Infinite-Dimensional Lie Groups,''
book in preparation.

\bibitem{Guendogan}
G{\"u}ndo\u{g}an, H.,
``Classification and Structure Theory of Lie Algebras of Smooth Sections,''
Logos Berlin, 2011.

\bibitem{Bas}
Janssens, B.; Wockel, C.,
\emph{Universal central extensions of gauge algebras and groups,}
J. Reine Angew. Math. 682 (2013), 129-139.

\bibitem{Jarchow}
Jarchow, H.,
``Locally Convex Spaces,''
Teubner, 1981.

\bibitem{Maier}
Maier, P.,
\emph{Central extensions of topological current algebras,}
Banach Center Publ. 55 (2002), 61-76.

\bibitem{Michor}
Michor, P., ``Manifolds of Differentiable Mappings,''
Shiva Publishing, 1980.

\bibitem{Neeb}
Neeb, K.-H.,
\emph{Universal central extensions of Lie groups,}
Acta Appl. Math. 73 (2002), 175-219.



\end{thebibliography}
\end{document}